\documentclass[11pt,a4paper]{article}

\usepackage[a4paper,margin=1in]{geometry}
\usepackage[T1]{fontenc}
\usepackage[utf8]{inputenc}
\usepackage{lmodern}
\usepackage{microtype}
\usepackage{graphicx}
\usepackage{amsmath,amssymb}
\usepackage{booktabs}
\usepackage{enumitem}
\usepackage{xcolor}
\usepackage{float}
\usepackage{hyperref}
\hypersetup{
  colorlinks=true,
  linkcolor=blue!60!black,
  citecolor=blue!60!black,
  urlcolor=blue!60!black
}
\usepackage{array}
\newcolumntype{C}[1]{>{\centering\arraybackslash}p{#1}}
\newcolumntype{L}[1]{>{\raggedright\arraybackslash}p{#1}}

\usepackage{tikz}

\usepackage{amsmath}
\usepackage{amsthm}

\usepackage[ruled,vlined,linesnumbered]{algorithm2e}

\theoremstyle{plain}
\newtheorem{theorem}{Theorem}[section]

\newtheorem{proposition}[theorem]{Proposition}

\theoremstyle{definition}
\newtheorem{definition}[theorem]{Definition}
\newtheorem{example}[theorem]{Example}

\newtheorem{remark}[theorem]{Remark}

\title{Stuck Knots: Rigidity, Invariants, and Unsticking Distance}
\author{Ioannis Diamantis\\
\small Department of Data Analytics and Digitalisation, Maastricht University\\
\small Maastricht, The Netherlands\\
\small \texttt{i.diamantis@maastrichtuniversity.nl}}
\date{}

\begin{document}

\maketitle

\begin{abstract}
A {\it stuck knot} is a knot diagram containing designated crossings, called {\it stuck crossings}, whose incident strands are required to remain locally non-separable. These rigidity constraints restrict the allowable ambient isotopies and introduce new geometric features into the study of knot embeddings. In this paper we develop a topological framework for knots governed by such constraints. We model stuck crossings as locally rigid configurations in spatial embeddings, placing stuck knots in close relation to rigid spatial graph theory while preserving the classical over–under information and orientation of crossings. We formalize the corresponding notion of isotopy and introduce the {\it unstick move}, which releases rigidity and allows classical simplifications to occur. To detect rigid structure algebraically, we construct polynomial invariants for stuck knots, including a HOMFLYPT-type invariant and a state-sum model extending the Kauffman bracket. These invariants show that rigidity contributes independent information even when the underlying classical knot type remains fixed. We further introduce a {\it relaxed isotopy} framework and define the {\it unsticking distance}, a geometric measure quantifying the minimal number of rigidity constraints that must be released in order to relate two stuck knots. This perspective interprets stuck crossings as barriers to isotopy and highlights the role of constraint release in diagrammatic simplification. 
\end{abstract}

\begingroup
\renewcommand{\thefootnote}{}
\footnotetext{
\textbf{MSC (2020):} 57K10, 57K12, 57K14, 05C10, 57M15\\
\textbf{Keywords:} stuck knots, rigid crossings, skein invariants, state-sum invariants, rigid spatial graphs, unsticking distance.
}
\endgroup

\section{Introduction}

Classical knot theory studies embeddings of $S^1$ into $S^3$ up to ambient isotopy, revealing deep connections between topology, geometry and algebra; see for example Rolfsen~\cite{Rolfsen}. Over time, numerous extensions of the classical framework have emerged by modifying either the class of admissible embeddings or the local move structure of knot diagrams. Examples such as singular knots \cite{Vassiliev1990}, virtual knots~\cite{Kauffman1999}, and rigid spatial graphs~\cite{Kauffman1989} demonstrate that altering local flexibility often produces new invariants and exposes previously hidden geometric phenomena (cf.~Kauffman \cite{Kauffman1991}, Yamada~\cite{Yamada1989}).

A natural way to restrict local flexibility is to impose non-separability conditions at crossings. In this spirit, stuck knots were introduced by Bataineh \cite{Bataineh2020} as knot diagrams in which selected crossings, called {\it stuck crossings}, represent strands that are physically attached while retaining over-under information. This framework extends the setting of singular knots by incorporating directional structure at rigid vertices and was originally motivated in part by applications to RNA foldings. Several diagrammatic invariants were developed in this context, including polynomial constructions capable of detecting chirality.

Following Bataineh’s introduction, a growing body of work has begun to develop the algebraic foundations of stuck knots and links. Ceniceros, Elhamdadi, Komissar and Lahrani \cite{CenicerosElhamdadiKomissarLahrani2024} investigated the topology of RNA foldings using stuck link diagrams, introducing an algebraic structure encoding the oriented stuck Reidemeister moves together with coloring invariants and explicit computations. Subsequent work by Ceniceros, Elhamdadi, Magill and Rosario \cite{CenicerosElhamdadiMagillRosario2023} extended quandle cocycle invariants to the stuck setting by assigning Boltzmann weights at both classical and stuck crossings, yielding new polynomial invariants. More recently, Bondarenko, Ceniceros, Elhamdadi and Jones \cite{BondarenkoCenicerosElhamdadiJones2025} introduced generalized quandle polynomials and the algebraic framework of stuquandles, further enriching the invariant theory of stuck links.

\smallbreak 

The present work adopts a complementary perspective. Rather than focusing primarily on diagrammatic invariants, we develop a spatial and geometric framework for studying knots governed by rigidity constraints. In particular, we interpret stuck crossings as rigid-height vertices in spatial embeddings, introduce a notion of constraint release via unsticking, and investigate how local rigidity influences global isotopy behavior. From this standpoint, stuck knots may be viewed as embeddings whose ambient isotopies are themselves constrained by prescribed local structure. Rigidity is therefore not merely combinatorial; it acts as a geometric obstruction that can prevent classical simplification moves from occurring. Understanding how such local constraints influence global knot behavior is a primary motivation for this work.

\medskip

A key operation in this setting is the \emph{unstick move}, which releases a rigidity constraint while preserving the underlying crossing information. This move allows one to study rigidity dynamically by examining when classical isotopies become available after sufficient constraints have been removed. Complementing this operational viewpoint, we construct algebraic invariants that detect rigidity directly. We present a HOMFLYPT-type polynomial adapted to diagrams containing rigid crossings and develop a state-sum invariant extending the classical Kauffman bracket. These constructions demonstrate that rigidity contributes independent algebraic information even when the classical knot type remains fixed.

Beyond invariant construction, we introduce a relaxed equivalence framework in which rigidity may be irreversibly released. This leads naturally to the notion of \emph{unsticking distance}, a geometric measure quantifying the minimal number of constraints that must be removed in order to relate two stuck knots. The unsticking distance provides a direct interpretation of rigid crossings as barriers to isotopy and highlights the role of constraint release in diagrammatic simplification. We further interpret stuck knots within the setting of rigid spatial graphs, situating the theory inside a broader topological context and emphasizing that rigidity should be understood as part of the embedded structure rather than as a diagrammatic artifact.

\medskip

The main contributions of this paper are as follows:

\begin{itemize}
\item We establish a spatial framework for stuck knots by interpreting stuck crossings as rigid-height vertices in embedded graphs.

\item We formalize stuck isotopy and introduce the unstick move as a mechanism for releasing local constraints.

\item We construct polynomial invariants, including a HOMFLYPT-type polynomial and a state-sum model, that detect rigidity.

\item We develop a relaxed isotopy framework and define the unsticking distance, providing a geometric measure of rigidity release.

\item We relate stuck knots to rigid spatial graph theory, positioning the subject within a broader landscape of topology under local constraints.
\end{itemize}

\medskip

Taken together, these results indicate that stuck knots form a natural class of constrained embeddings whose behavior is governed by the interaction between local rigidity and global topology. The structures introduced here suggest a foundation for studying knot theory in the presence of prescribed geometric constraints. This paper should be viewed as the first step toward a theory of topology under local constraints.

\medskip

The paper is organized as follows. Section~\ref{sec:StuckKnots} introduces stuck knots, their spatial realization and defines stuck isotopy. In Section~\ref{sec:unstick} we define the unstick move. Section~\ref{sec:HOMFLYPT} develops a HOMFLYPT-type invariant, while Section~\ref{sec:KaufBr} introduces a state-sum invariant. In Section~\ref{sec:SpatialGraphs} we interpret stuck knots as rigid spatial graphs, and in Section~\ref{sec:Unsticking Distance} we introduce relaxed isotopy and the unsticking distance, providing a geometric measure of rigidity release.


\section{Stuck Knots as Rigid-Height Embeddings}\label{sec:StuckKnots}

The concept of stuck knots was introduced by Bataineh in \cite{Bataineh2020} and is closely related to the theory of rigid vertex spatial graphs developed by Kauffman \cite{Kauffman1989}.

\begin{definition}\label{def:stucKnots}
An \emph{oriented stuck knot diagram} is a pair $(D,S)$ where $D$ is an oriented knot or link diagram in the plane and $S$ is a distinguished finite subset of its crossings, called \emph{stuck crossings}. Each crossing of $D$ is therefore of one of the following two types:

\begin{itemize}
\item \emph{Classical crossing:} a usual crossing with specified over-under information.

\item \emph{Stuck crossing:} a crossing whose over–under information is designated as fixed.
\end{itemize}
\end{definition}

The height data may be recorded diagrammatically by marking one pair of opposite regions, following the regional convention used in Kauffman's state model \cite{Kauffman1987}. The regional marking forms part of the combinatorial data of the diagram, as illustrated in Figure~\ref{fig:crossing_types}).

\begin{figure}[ht]
\centering
\includegraphics[width=0.7\textwidth]{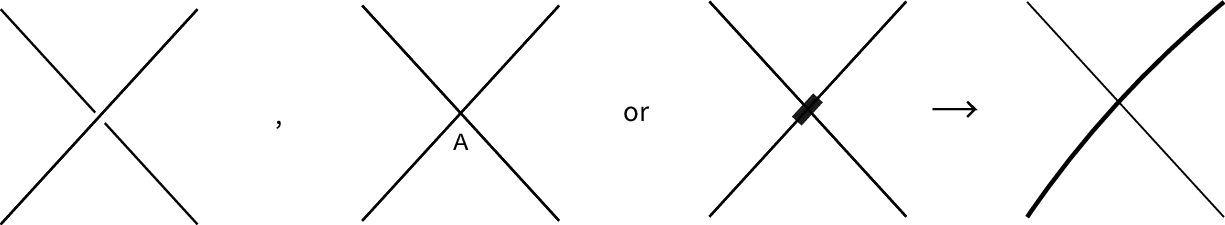}
\caption{Left: a classical crossing. Right: a stuck crossing represented as a rigid vertex together with a distinguished region (marked $A$) determining the height convention.}
\label{fig:crossing_types}
\end{figure}

\begin{remark}
Stuck knots are related to several generalizations of classical knot theory. Unlike singular knots, where double points represent unresolved crossings (cf.~Kauffman \cite{Kauffman1991}), rigid-height crossings encode fixed crossing information. They also differ from rigid-vertex spatial graphs, as the vertex is endowed with explicit height data. Finally, stuck crossings are not indeterminate as in pseudo-knot theory (see, e.g., Hanaki \cite{Hanaki2010}), but instead impose a permanent geometric constraint.
\end{remark}

Figure~\ref{fig:stuck_diagram} shows typical stuck knot diagrams.

\begin{figure}[ht]
\centering
\includegraphics[width=0.45\textwidth]{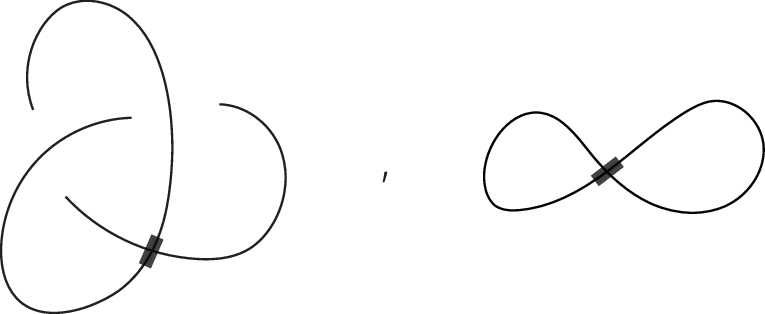}
\caption{Left: The trefoil knot containing both classical and stuck crossings. Right: The unknot with one stuck crossing.}
\label{fig:stuck_diagram}
\end{figure}

A defining feature of a stuck crossing is that the incident strands are required to remain locally incident at the vertex in the equivalence relation introduced later; in particular, the vertex cannot be removed by any move that separates the strands. To model this constraint rigorously, we interpret stuck crossings as rigid vertices in spatial graphs endowed with height ordering.

\begin{definition}
A \emph{rigid-height vertex} is a crossing arising from the transverse projection of two arcs together with a specified ordering indicating which arc passes over the other.
\end{definition}

The rigidity condition indicates that the incident arcs are to be regarded as locally non-separable, while the precise implications for admissible deformations are deferred to the definition of stuck isotopy (see \S~\ref{stuckiso}). In this way, a stuck knot may be viewed as arising from an embedding of a $4$-valent spatial graph with rigidity imposed at designated vertices. This perspective places stuck knots in close conceptual proximity to rigid vertex spatial graphs, while retaining the directional information characteristic of classical crossings. The correspondence between diagrams and spatial embeddings can be formalized through a lifting construction in the next subsection.

We emphasize that rigidity imposes structural constraints on the embedding rather than merely decorating a planar projection. Consequently, stuck knots are treated as genuine topological objects rather than purely diagrammatic constructions.


\subsection{Spatial Realization}

Although Definition~\ref{def:stucKnots} is diagrammatic, every stuck knot admits a natural realization in three-dimensional space, analogous to the lift constructions used for pseudo links \cite{Diamantis2024}. This spatial interpretation reinforces that stuck knots are topological objects rather than purely planar artifacts.

\begin{definition}
A \emph{spatial stuck knot} is a pair $(f,S)$ consisting of a tame embedding $f:S^1 \hookrightarrow S^3$ together with a finite distinguished subset $S \subset S^1$, called the set of \emph{stuck points}. For each $p \in S$, the image $f(p)$ is equipped with a prescribed local height ordering in a small neighborhood $B_\varepsilon(f(p))$, determining a fixed over-under relation between the incident arcs. These points are called \emph{rigid vertices}.

Generic planar projections of such embeddings produce diagrams in which 
transverse double points correspond to classical crossings, while the 
images of points in $S$ project to \emph{stuck crossings}.
\end{definition}

Such embeddings may be obtained via a standard lifting procedure. Each classical crossing is realized inside a sufficiently small $3$-ball so that the over arc lies above the under arc without intersecting it. Stuck crossings are supported within small $3$-balls containing rigid-height vertices, where the incident arcs intersect at a rigid vertex and respect the prescribed height ordering. The arcs connecting crossings are replaced by embedded arcs in three-space chosen to avoid unintended intersections (for an illustration see Figure~\ref{fig:spatial_stuck}).

\begin{figure}[ht]
\centering
\includegraphics[width=0.4\textwidth]{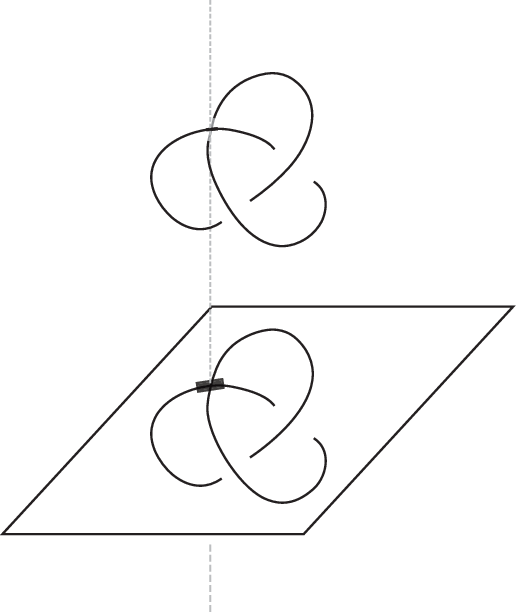}
\caption{A diagrammatic stuck knot together with a spatial realization obtained via the lifting construction.}
\label{fig:spatial_stuck}
\end{figure}

Consequently, stuck knots may be viewed as a generalization of classical knots where specific local intersection structures are preserved by rigid vertex constraints. Indeed, every classical knot admits a realization as a stuck knot with an empty set of rigid vertices. This framework therefore naturally embeds the category of classical knots into a broader class of constrained embeddings. The implications of these constraints for the global equivalence of knots will be addressed in the next subsection.

\begin{remark}
From a geometric standpoint, a stuck crossing resembles a {\it bonded interaction} between strands: one arc passes over the other while an attracting bond maintains their local attachment. Such interactions are studied in the theory of bonded knots and braids \cite{DiamantisKauffmanLambropoulou2025}, suggesting a conceptual bridge between rigidity constraints and bonded structures.
\end{remark}


\subsection{Stuck Isotopy}\label{stuckiso}

Having established the spatial interpretation of stuck knots, we now define the appropriate notion of equivalence. Since a stuck knot is realized as a spatial embedding with rigid-height vertices, the natural equivalence is ambient isotopy subject to preservation of this rigid structure.

\begin{definition}
Two spatial stuck knots are said to be \emph{stuck isotopic} if they are related by an ambient isotopy of $S^3$ that preserves each rigid-height vertex and its height ordering.
\end{definition}

In particular, no ambient isotopy is permitted that creates, removes or splits a rigid-height vertex. Thus rigidity constitutes an intrinsic feature of the embedding rather than a removable diagrammatic decoration. The spatial definition above admits a complete diagrammatic formulation via local moves, extending the classical Reidemeister theorem to embeddings with rigid vertices. Diagrammatic moves for stuck knots are illustrated in \cite{Bataineh2020} and follow from the standard arguments for spatial graphs. Here we formalize these moves within the spatial framework described above and state their completeness.

\begin{theorem}[The analogue of the Reidemeister theorem for stuck knots]
Two stuck knot diagrams represent stuck isotopic spatial embeddings if and only if they are related by a finite sequence of the local moves illustrated in Figure~\ref{fig:rigid_moves}. These moves consist of:

\begin{enumerate}
\item planar isotopies of the diagram;
\item classical Reidemeister moves performed away from stuck crossings;
\item rigid-vertex moves involving stuck crossings that preserve the height ordering and keep the incident strands attached at the vertex.
\end{enumerate}
\end{theorem}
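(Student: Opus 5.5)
The plan is to reduce the statement to the known Reidemeister-type theorem for rigid-vertex spatial graphs (Kauffman \cite{Kauffman1989}), applied to the $4$-valent graph obtained by viewing each stuck crossing as a rigid vertex.

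\textbf{Soundness.} I first check that each listed move is realized by a stuck isotopy, which is a local verification.

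\begin{itemize}
\item Planar isotopies and classical Reidemeister moves supported in discs disjoint from the stuck crossings lift to ambient isotopies supported in $3$-balls. These balls avoid the balls $B_\varepsilon(f(p))$ for $p\in S$, so they fix every rigid-height vertex pointwise.
\item Each rigid-vertex move (sliding a strand over or under a stuck crossing, and the twist move that rotates the rigid vertex by $\pi$ about the axis of one strand) is realized by an ambient isotopy. This isotopy moves the small ball around the vertex as a rigid body, so the incident arcs stay attached.
\item The twist move needs one more check. The rotation sends the over-arc to the over-arc, so the height ordering is preserved. I expect it to change the regional marking by exchanging the marked pair of regions in a controlled way, and I will record this explicitly in the figure.
\end{itemize}

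\textbf{Completeness.} This is the converse direction. Let $H_t$ be a stuck isotopy between the spatial realizations $f_0$ and $f_1$.

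\begin{itemize}
\item First I replace each rigid-height vertex by a small rigid disc, as in Kauffman's treatment. The disc is spanned by the two incident arcs, with the over/under label recorded as a coorientation of the disc relative to the projection direction. Preserving the height ordering then becomes preserving this labelled rigid disc.
\item The stuck knot thereby becomes a rigid-vertex spatial graph with decorated vertices, and $H_t$ is a rigid-vertex isotopy respecting the decorations.
\item By general position I perturb $H_t$ so that, for all but finitely many $t$, the projection of $H_t\circ f_0$ is a regular diagram. At the finitely many critical times, exactly one codimension-one degeneracy occurs.
\end{itemize}

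\textbf{Classifying the degeneracies.} The degeneracies come in two groups.

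\begin{itemize}
\item \emph{Away from the vertex discs.} These are tangencies of an arc with itself, triple points, and cusps. They give the classical Reidemeister moves performed away from stuck crossings.
\item \emph{Involving a vertex disc.} Either an edge passes over or under the disc, or the disc becomes vertical to the projection direction (its plane contains the projection direction) and flips. The first yields the slide moves. The flip yields the twist move.
\end{itemize}

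Between critical times the diagram changes by planar isotopy. Chaining these steps gives the required finite sequence.

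\textbf{Main obstacle.} I expect the hardest step to be the flip degeneracy of a vertex disc, because this is where the height data enters. I must show two things:

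\begin{itemize}
\item a flip preserving the coorientation of the labelled disc is exactly the rigid-vertex twist move with the over/under data carried along;
\item a flip reversing the over/under relation cannot arise, since $H_t$ preserves the ordering by hypothesis.
\end{itemize}

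To handle this I will parametrize the disc's orientation by a path in $SO(3)$ and analyze the generic crossings of the locus where the projection direction lies in the disc's plane. A subtler point is that rigidity forbids the classical R2 that would separate the two strands at a stuck crossing. This is automatic in my setup, because the two strands are joined inside the rigid disc, so no generic degeneracy of $H_t$ can separate them.
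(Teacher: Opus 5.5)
Your overall route (general position for the isotopy, a classification of the finitely many codimension-one degeneracies of the projection, and local lifting of each move for the converse) matches the paper's sketch. The paper simply asserts that the degeneracies near a rigid vertex are the rigid-vertex moves. The problem is in the one place you make this explicit, the flip.

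\textbf{Height data at a flip.} A rotation by $\pi$ about any horizontal axis negates the height coordinate. So after your twist, the strand that was on top is underneath, and the claim that the rotation ``sends the over-arc to the over-arc'' is false. In your disc model, $H_t$ carries the coorientation $n$ along, so the labelled disc is preserved automatically, by continuity. What a flip changes is the sign of $n\cdot v$, where $v$ is the projection direction. That sign changes at \emph{every} generic time when the disc's plane contains $v$. Hence there is no family of ``coorientation-preserving'' flips to keep and ``reversing'' flips to discard: every flip reverses which labelled strand is on top in the diagram. Such flips do occur in stuck isotopies; rotate the whole embedding by $\pi$ about a horizontal axis and perturb. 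The hypothesis on $H_t$ constrains only the co-moving structure, not the projected over/under, so it cannot exclude them.

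\textbf{Consequences for both directions.} A twist drawn with the originally-over strand still on top differs from what the rigid motion produces by a change $L_\ast^{+}\leftrightarrow L_\ast^{-}$ of the stuck crossing. Your soundness check would therefore be certifying a stuck crossing change rather than the motion you describe. Discarding the ``reversing'' flips leaves the degeneracy that actually occurs unrecorded, so completeness also fails.

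\textbf{The computation you need.} Take the standard flip $(x,y,z)\mapsto(-x,y,-z)$ of the vertex ball.
\begin{itemize}
\item The two strands exchange positions and heights, so the knot strand that was over becomes the under strand.
\item The oriented local picture of the vertex is unchanged, and hence so is its type $L_\ast^{\pm}$, since a rotation preserves crossing signs.
\item A classical crossing between the two incident edges on one side of the vertex is traded for one on the opposite side. This is the stuck analogue of the singular RV move.
\end{itemize}
The phrase ``preserve the height ordering'' in the statement must be read in this intrinsic, oriented sense, not as ``the same strand stays on top.''
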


\begin{proof}
The argument follows the classical correspondence between ambient isotopy and Reidemeister moves, with additional care taken near rigid vertices. Given an ambient isotopy between two spatial stuck knots that preserves the rigid-height vertices, a standard generic argument ensures that the isotopy may be chosen so that its projection to the plane is regular except at finitely many critical moments. Each such moment corresponds to a local configuration change modeled by one of the moves listed above.

Away from rigid vertices, the classical Reidemeister moves arise exactly as in the usual proof. Near a rigid vertex, the isotopy is constrained to preserve the transverse intersection and the prescribed height ordering, so the only admissible local changes are the rigid-vertex moves. Conversely, each diagrammatic move is supported in a sufficiently small $3$-ball and lifts to an ambient isotopy respecting the rigid structure. Therefore these moves generate precisely the equivalence induced by stuck isotopy.
\end{proof}

\begin{figure}[ht]
\centering
\includegraphics[width=0.95\textwidth]{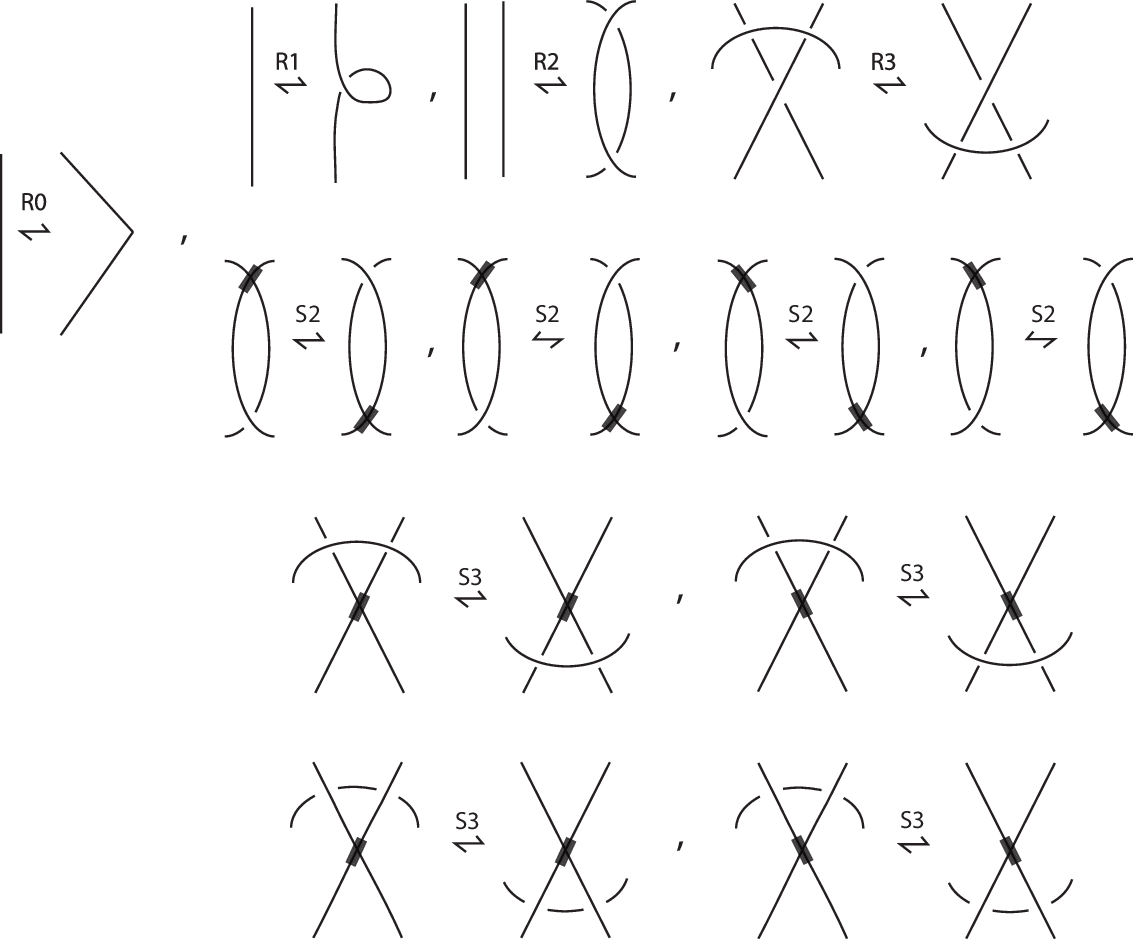}
\caption{Isotopy moves for classical and stuck crossings.}
\label{fig:rigid_moves}
\end{figure}

\begin{remark}
We highlight that Reidemeister 1 moves are not permitted at rigid-height vertices, since such moves would require separating the incident strands and thereby violate the rigidity constraint. Moreover, it is worth mentioning that oriented Reidemeister type moves for stuck links have been studied in recent work of Ceniceros et al. \cite{CenicerosElhamdadiKomissarLahrani2024}, where an algebraic framework was introduced.
\end{remark}

As mentioned before, every classical knot admits a realization as a stuck knot with no rigid-height vertices, so the classical theory appears naturally within the present framework. Under this identification, stuck isotopy restricts to classical ambient isotopy. We now formalize this relationship. Let $\mathcal{SK}$ denote the set of stuck knot (or link) types, namely spatial stuck knots modulo stuck isotopy, and let $\mathcal{K}$ denote the set of classical knot (or link) types.

\begin{definition}
The \emph{rigidity-forgetting map}
\[
\pi:\mathcal{SK}\to\mathcal{K}
\]
is defined by forgetting the rigid-height vertices in a spatial stuck knot, thereby considering only the underlying embedded circle $f(S^1) \subset S^3$.
\end{definition}

Intuitively, the map $\pi$ extracts the classical knot obtained by ignoring the rigid constraints while preserving the ambient embedding.

\begin{proposition}
The map $\pi$ is well-defined.
\end{proposition}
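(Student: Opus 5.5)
To show that $\pi$ is well defined, I would check that the classical knot type $\pi([f,S])$ does not depend on the representative of the stuck isotopy class. So take two spatial stuck knots $(f_0,S_0)$ and $(f_1,S_1)$ that are stuck isotopic, and show that the underlying embeddings $f_0(S^1)$ and $f_1(S^1)$ are ambient isotopic in the classical sense. Well-definedness at the level of representatives is immediate: forgetting $S$ leaves a tame embedding $f:S^1\hookrightarrow S^3$, which determines an element of $\mathcal{K}$.

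\textbf{Spatial argument.} By definition of stuck isotopy there is an ambient isotopy $H:S^3\times[0,1]\to S^3$ with $H_0=\mathrm{id}$ and $H_1\circ f_0(S^1)=f_1(S^1)$, which in addition preserves each rigid-height vertex and its height ordering. Forgetting these extra constraints, $H$ is in particular an ordinary ambient isotopy of $S^3$ carrying $f_0(S^1)$ to $f_1(S^1)$. Since the class of ambient isotopies preserving rigid structure is a subclass of all ambient isotopies, stuck isotopy refines classical ambient isotopy. Hence $[f_0(S^1)]=[f_1(S^1)]$ in $\mathcal{K}$.

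\textbf{Diagrammatic cross-check.} Via the analogue of the Reidemeister theorem for stuck knots, it suffices to check each move of Figure~\ref{fig:rigid_moves} after replacing every stuck crossing by the classical crossing with the same over--under data, which is exactly what forgetting the rigid vertex does in the spatial lift. There are three kinds of move:
\begin{enumerate}
\item planar isotopies are unchanged under forgetting;
\item classical Reidemeister moves away from stuck crossings are unchanged;
\item each rigid-vertex move becomes a classical move or a composition of classical moves.
\end{enumerate}
For the third kind, a stuck crossing passing under or over a strand becomes a Reidemeister III move. Twisting or flipping the rigid vertex while preserving the height ordering becomes a composition of Reidemeister II and III moves, or a planar isotopy.

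\textbf{Main subtlety.} The only delicate point is the meaning of ``forgetting the vertex'' when a stuck point is an actual intersection of two arcs in the spatial model. In that case $f(S^1)$ is not embedded near $f(p)$. I would resolve this by perturbing each rigid vertex inside $B_\varepsilon(f(p))$ according to its prescribed height ordering, pushing the over-arc slightly up. Such a resolution is unique up to isotopy supported in the ball, because the height ordering is fixed and preserved by stuck isotopies. The isotopy $H$ can then be modified inside these small balls to be compatible with the resolutions, which reduces this case to the argument above.
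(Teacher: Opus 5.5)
Your spatial argument is exactly the paper's proof: a stuck isotopy is in particular an ambient isotopy of $S^3$, so once the rigid-height data is forgotten the same isotopy relates the underlying circles, and the classical type does not depend on the representative. The diagrammatic cross-check and the perturbation of rigid vertices that are genuine intersection points are not in the paper, which simply takes $f$ to be an embedding even though its lifting construction has the arcs meet at the vertex; they are extra care and not a needed step under the stated definition.
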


\begin{proof}
Suppose two spatial stuck knots represent the same element of $\mathcal{SK}$. By definition, they are related by an ambient isotopy of $S^3$ through spatial stuck knots, i.e.\ an isotopy that preserves the rigid-height vertices (and their height ordering) as part of the embedded structure. If we now forget the rigid-height data, the very same ambient isotopy is an ambient isotopy between the underlying embedded circles in $S^3$. Hence the resulting classical knot type is independent of the chosen representative, and $\pi$ is well-defined. Equivalently, the map $\pi$ is the canonical map obtained by forgetting the rigid-height vertices.
\end{proof}

The map $\pi$ shows that every stuck knot determines an underlying classical knot type. Conversely, classical knots may be viewed as special cases of stuck knots in which no rigid vertices are present. This observation leads to a natural inclusion.

\begin{definition}
Define
\[
\iota:\mathcal{K}\to\mathcal{SK}
\]
by viewing a classical knot as a stuck knot with no rigid-height vertices.
\end{definition}

Thus classical knot theory embeds as a full sub-theory of stuck knot theory.

\begin{proposition}
The map $\iota$ is well-defined and injective, and $\pi\circ\iota=\mathrm{id}_{\mathcal{K}}$.
\end{proposition}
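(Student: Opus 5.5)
The plan is to verify the three claims in order: well-definedness of $\iota$, the identity $\pi\circ\iota=\mathrm{id}_{\mathcal{K}}$, and injectivity of $\iota$. The last will follow formally from the second, so the only real content is in the first.

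\emph{Well-definedness.} Take two tame embeddings $f_0,f_1:S^1\hookrightarrow S^3$ representing the same classical knot type. Then there is an ambient isotopy $H_t$ of $S^3$ with $H_1\circ f_0=f_1$. Viewed as spatial stuck knots, these are $(f_0,\emptyset)$ and $(f_1,\emptyset)$. The definition of stuck isotopy asks for an ambient isotopy preserving each rigid-height vertex and its height ordering. With $S=\emptyset$ this condition is vacuous, so $H_t$ is itself a stuck isotopy. Hence $[(f_0,\emptyset)]=[(f_1,\emptyset)]$ in $\mathcal{SK}$, and $\iota$ is well-defined.

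The one point needing care is that the class of $(f,\emptyset)$ in $\mathcal{SK}$ contains only embeddings with no rigid vertices. No stuck isotopy can create a rigid vertex: as stated after the definition of stuck isotopy, creating, removing or splitting a rigid-height vertex is forbidden. This is not strictly needed for the present proposition, but it makes clear that $\iota$ lands in a well-defined sub-theory. If one prefers a diagrammatic argument, one can instead invoke the analogue of the Reidemeister theorem for stuck knots. Diagrams with no stuck crossings related by classical Reidemeister moves are related by moves of type (1) and (2) of that theorem, so they are stuck isotopic.

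\emph{The identity $\pi\circ\iota=\mathrm{id}_{\mathcal{K}}$.} For a class $[f]\in\mathcal{K}$, $\iota[f]=[(f,\emptyset)]$. Applying $\pi$ forgets the (empty) rigid data and returns the class of the underlying circle $f(S^1)$, which is $[f]$. Here we use the earlier proposition that $\pi$ is well-defined, so the result does not depend on the representative of $\iota[f]$.

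\emph{Injectivity.} This is immediate from the identity just proved. If $\iota(K_1)=\iota(K_2)$, then $K_1=\pi(\iota(K_1))=\pi(\iota(K_2))=K_2$; equivalently, $\iota$ has a left inverse. I do not expect any genuine obstacle. The main thing to state explicitly is why an ordinary ambient isotopy counts as a stuck isotopy when $S=\emptyset$, namely that the preservation condition is empty.
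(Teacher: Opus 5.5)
Your proposal is correct and follows the paper's own proof: with $S=\emptyset$ the preservation condition is vacuous, so a classical ambient isotopy is already a stuck isotopy, forgetting the empty rigidity data gives $\pi\circ\iota=\mathrm{id}_{\mathcal{K}}$, and injectivity follows. Deriving injectivity from the fact that $\pi$ is a left inverse of $\iota$ just spells out what the paper leaves as ``follows immediately.''
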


\begin{proof}
If two classical knots are ambient isotopic, then, when regarded as stuck knots with no rigid-height vertices, the same ambient isotopy is admissible in the stuck setting; hence $\iota$ is well-defined. Moreover, since $\iota$ introduces no rigid-height vertices, applying $\pi$ forgets nothing, so $\pi\circ\iota=\mathrm{id}_{\mathcal{K}}$. Injectivity follows immediately.
\end{proof}

Together, the maps $\pi$ and $\iota$ clarify the structural relationship between the two theories: classical knots embed naturally into the broader class of stuck knots, while the rigidity-forgetting map recovers the underlying classical knot type of a constrained embedding.

\begin{remark}
For a fixed classical knot type $K$, the preimage (fiber) $\pi^{-1}(K)$ typically contains many distinct stuck knot types arising from different configurations of rigid-height vertices. Thus the forgetful map $\pi$ exhibits stuck knot theory as a refinement of classical knot theory in which additional rigidity data enriches the underlying embedding.
\end{remark}

\begin{remark}
If one forgets both the rigidity constraint and the height ordering at each rigid-height vertex, a spatial stuck knot determines a singular knot, that is, an immersed knot with finitely many transverse double points. Unlike the rigidity-forgetting map to classical knots, which preserves the embedded structure, this projection collapses the height data and records only the presence of transverse intersections. This observation places stuck knots in conceptual proximity to singular knot theory and suggests that techniques developed for singular knots may prove useful in the study of rigid-height embeddings.
\end{remark}


\section{The Unstick Move}\label{sec:unstick}

Since rigid-height vertices impose local constraints on ambient isotopy and may obstruct the simplification of a spatial embedding, a natural operation is to release such constraints while preserving the underlying curve. This leads to the unstick move, which plays a structural role in the theory by allowing transitions between embeddings of different ``rigidity levels''.

\begin{definition}
Let $(D,S)$ be a stuck knot diagram and let $x \in S$.  
The \emph{unstick move} at $x$ removes the rigidity constraint at that crossing, converting it into a classical crossing with the same over-under structure.
\end{definition}

We denote this operation by
\[
\mathcal{U}_x : (D,S) \to (D,S \setminus \{x\}).
\]

\begin{figure}[ht]
\centering
\includegraphics[width=0.65\textwidth]{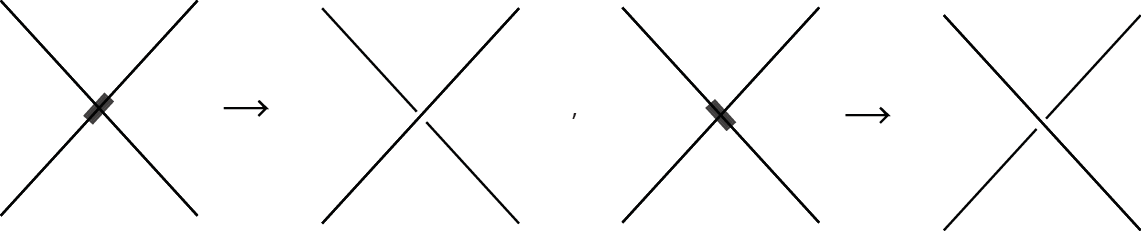}
\caption{The unstick move removes the rigidity constraint at a crossing while preserving the over-under structure.}
\label{fig:unstick_move}
\end{figure}

Geometrically, the unstick move releases a local constraint in the spatial embedding without altering the underlying embedded curve. In particular, it changes the rigidity structure rather than the ambient isotopy class of the knot. 

\begin{definition}
The \emph{trivial stuck knot} is the stuck knot represented by a planar diagram of a simple closed curve with no crossings and no rigid-height vertices; equivalently, it is the classical unknot viewed within the category of stuck knots.
\end{definition}

Thus the trivial stuck knot corresponds to the classical unknot under the inclusion of classical knots into stuck knot theory. This observation is made precise in the following proposition.

\begin{proposition}
The unstick move does not alter the underlying classical knot type.
\end{proposition}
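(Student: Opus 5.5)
The plan is to reduce the statement to the definition of the rigidity-forgetting map $\pi$ and its well-definedness, proved earlier in this section. We must show that for every stuck knot diagram $(D,S)$ and every $x\in S$,
\[
\pi\bigl(\mathcal{U}_x(D,S)\bigr)=\pi(D,S)
\]
in $\mathcal{K}$. We will work at the level of spatial realizations, since $\pi$ is defined there.

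\textbf{Step 1: a common lift.} Choose a spatial realization $(f,S)$ of $(D,S)$ by the lifting construction. The only difference between the data of $(D,S)$ and of $(D,S\setminus\{x\})$ lies in the small $3$-ball $B_\varepsilon$ around the lift of $x$.
\begin{itemize}
\item In $(D,S)$, the crossing $x$ is a rigid-height vertex with a prescribed over-under ordering.
\item In $(D,S\setminus\{x\})$, the crossing $x$ is a classical crossing with the same over-under information, by definition of $\mathcal{U}_x$.
\end{itemize}
The unstick move removes only the rigidity label and keeps the height ordering. Hence we may take the realization of $\mathcal{U}_x(D,S)$ to be the same embedding $f$ outside $B_\varepsilon$, with $S$ replaced by $S\setminus\{x\}$.

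\textbf{Step 2: matching inside $B_\varepsilon$.} Inside $B_\varepsilon$, the underlying curve of the rigid vertex consists of two arcs respecting the prescribed height order; the over arc passes above the under arc. After forgetting rigidity, this is precisely the local model of a classical crossing with that over-under datum. If one prefers to treat the vertex as a genuine touching point, a small vertical push of the over arc, supported in $B_\varepsilon$, gives an ambient isotopy rel $\partial B_\varepsilon$ to the classical crossing model.

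\textbf{Step 3: conclusion.} The underlying embedded circles of the two realizations therefore agree, or differ by an isotopy supported in $B_\varepsilon$. Hence they define the same classical knot type. Well-definedness of $\pi$ then shows the conclusion is independent of the chosen representatives and realizations. Diagrammatically, this says that after forgetting all of $S$, the diagrams $D$ and $\mathcal{U}_x(D)$ are literally the same classical diagram.

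\textbf{Main obstacle.} The only delicate point is Step 2: the definition describes the incident arcs at a rigid vertex as \emph{intersecting}, which would make $f$ non-injective. I would resolve this by fixing the convention that the height ordering at a stuck point means the arcs are disjoint, with the over arc at greater height, and that "attachment" is a constraint on allowed isotopies rather than an actual intersection. This convention is consistent with the definition of a spatial stuck knot via an embedding $f$. With it, Step 2 is immediate.
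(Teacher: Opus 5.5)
Your proposal is correct and follows essentially the same route as the paper: the unstick move changes only the label set $S$ and leaves the underlying embedded circle (indeed the diagram $D$ itself) untouched, so applying $\pi$ before or after the move gives the same classical knot type. Your discussion of the touching-point issue clarifies something the paper glosses over. Note, though, that if the arcs genuinely met, the vertical push would be a resolution of a double point rather than an ambient isotopy. So your adopted convention that $f$ is an embedding, as the paper's definition of a spatial stuck knot requires, is the right way to settle it.
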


\begin{proof}
The unstick move changes only the additional rigidity data: it replaces a rigid-height vertex by an ordinary transverse crossing while leaving the embedded circles, and hence the underlying spatial embedding obtained by the lifting construction, unchanged. Therefore applying the rigidity-forgetting map $\pi$ before or after an unstick move yields the same classical embedding, and the underlying classical knot type is unchanged.
\end{proof}

\begin{remark}
Consequently, unsticking should be viewed as modifying rigidity data within a fixed classical knot type, i.e.\ the unstick move alters the constraint structure of the embedding rather than its underlying ambient isotopy class.
\end{remark}

This observation motivates a numerical measure of rigidity defined in terms of the minimal number of unsticking operations required to reach the trivial stuck knot (see \S~\ref{sec:Unsticking Distance}).


\subsection{Rigidity Filtration}\label{subsec:filtration}

Since stuck isotopy does not permit the creation or deletion of rigid-height vertices, each stuck knot carries a well-defined measure of rigidity given by the number of such vertices.

\begin{definition}
For $k \ge 0$, let $\mathcal{R}_k$ denote the class of stuck knot types with exactly $k$ rigid-height vertices. This integer will be referred to as the {\it rigidity level} of the stuck knot.
\end{definition}

The rigidity level is well-defined at the level of stuck knot types, since stuck isotopy preserves the number of rigid-height vertices. The collection $\{\mathcal{R}_k\}_{k \ge 0}$ organizes the set of stuck knot types into levels of increasing structural constraint. Moreover, the unstick move strictly decreases the rigidity level, mapping $\mathcal{R}_k$ into $\mathcal{R}_{k-1}$. Thus rigidity endows the collection of stuck knots with a natural hierarchical structure, resembling a filtration by rigidity. This filtration suggests that stuck knot theory admits a natural grading by rigidity, a feature that provides a natural framework for algebraic and skein-theoretic constructions.

\begin{remark}
Passing from $\mathcal{R}k$ to $\mathcal{R}{k-1}$ requires an unstick move, which releases a single rigidity constraint in the spatial embedding. Note also that the level $\mathcal{R}_0$ corresponds to the class of classical knots. 
\end{remark}

\begin{example}
A diagram of the unknot containing a single rigid-height vertex belongs to $\mathcal{R}_1$. Performing an unstick move places the knot in $\mathcal{R}_0$, after which classical Reidemeister moves reduce the diagram to the trivial stuck knot.
\end{example}


\section{A Skein Theoretic Invariant for Stuck Knots}\label{sec:HOMFLYPT}

The rigidity hierarchy introduced in the previous section suggests that stuck knot theory admits a natural grading according to the number of rigid-height vertices. A central objective is therefore to construct algebraic invariants capable of detecting how such local constraints interact with the ambient topology.

Skein theory provides a powerful mechanism for converting local geometric data into computable global invariants. Since the discovery of the Jones \cite{Jones1985} and HOMFLYPT \cite{HOMFLY} polynomials, skein relations have played a foundational role in knot theory by allowing crossing information to propagate throughout a diagram. We extend this philosophy to embeddings containing rigid vertices. Unlike singular crossings, rigid crossings carry height data and cannot be removed through isotopy. They therefore behave as ``persistent local defects'' in the embedding rather than transient intersection points. Any skein invariant for stuck knots must encode this persistence while remaining compatible with the classical theory. Throughout this section, we work with oriented stuck knot diagrams and all isotopy moves are assumed to preserve orientation.

\bigbreak 

Let $L_+, L_-, L_0$ denote the standard oriented skein triple as illustrated in Figure~\ref{fig:skein_triple}. We recall that the HOMFLYPT polynomial $P$ is characterized by the skein relation
\[
aP(L_+) - a^{-1}P(L_-) = zP(L_0),
\]
together with the normalization $P(\text{unknot}) = 1$.

\begin{figure}[ht]
\centering
\includegraphics[width=0.35\textwidth]{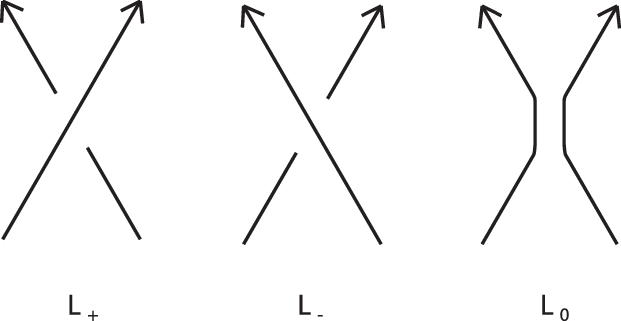}
\caption{The classical skein triple $(L_+, L_-, L_0)$.}
\label{fig:skein_triple}
\end{figure}


A rigid crossing inherits an orientation from its incident strands and appears in two forms depending on which strand is designated as passing over the other. We denote these local diagrams by $L_{\ast}^{+}$ and $L_{\ast}^{-}$.

\begin{figure}[ht]
\centering
\includegraphics[width=0.3\textwidth]{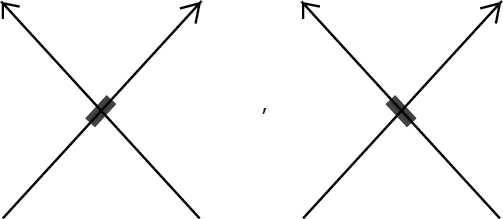}
\caption{The oriented rigid crossings $L_{\ast}^{+}$ and $L_{\ast}^{-}$.}
\label{fig:stuck_oriented}
\end{figure}

Because rigid crossings cannot be eliminated through isotopy, their skein behavior must encode the persistence of local attachment rather than merely the orientation of strands. This perspective motivates an extension of the classical HOMFLYPT recursion that incorporates rigidity directly into the skein framework. Our objective is to construct an invariant that detects the interaction between classical crossing data and rigidity constraints while remaining compatible with the rigidity hierarchy introduced earlier. We therefore augment the classical skein relation with additional local rules at rigid crossings, designed to preserve orientation data and reflect the non-separable nature of these vertices under admissible isotopies. We now introduce a skein invariant extending the HOMFLYPT polynomial to stuck knots.

\begin{definition}
The \emph{rigid HOMFLYPT polynomial} $P_R$ is defined recursively by the classical skein relation
\[
aP_R(L_+) - a^{-1}P_R(L_-) = zP_R(L_0),
\]
together with the rigid crossing relations
\[
P_R(L_{\ast}^{+}) = t\,P_R(L_0) + r\,P_R(L_+),
\]
\[
P_R(L_{\ast}^{-}) = t\,P_R(L_0) + r^{-1}P_R(L_-),
\]
where $a,z,t,r$ are independent indeterminates, and the normalization conditions

\[
P_R(\bigcirc)=1,
\qquad
P_R(L\sqcup \bigcirc)=\frac{a-a^{-1}}{z}\,P_R(L).
\]
\end{definition}

The parameters play distinct geometric roles: the variables $a$ and $z$ encode the classical skein behavior, the parameter $t$ measures the contribution of separating the strands, and the variable $r$ records the oriented height structure intrinsic to rigid crossings. The use of inverse coefficients for $r$ ensures compatibility with orientation reversal and prevents unnecessary proliferation of algebraic parameters.

\begin{remark}
When no rigid crossings are present, the defining relations reduce to those of the classical HOMFLYPT polynomial. Consequently, the restriction of $P_R$ to the stratum $\mathcal{R}_0$ recovers the classical invariant.
\end{remark}

We now verify that these recursive relations uniquely determine the invariant for all stuck knot diagrams.

\begin{proposition}
The relations above uniquely determine $P_R$ for all stuck knot diagrams.
\end{proposition}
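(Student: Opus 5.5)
The plan is an induction on the lexicographic pair $(|S|, c(D))$, where $|S|$ is the number of stuck crossings (the rigidity level of \S\ref{subsec:filtration}) and $c(D)$ is the number of classical crossings. The aim is to show that the relations compute a value of $P_R(D,S)$ for every stuck diagram, and that this value is forced. Uniqueness is thus read as: any function satisfying the relations and normalizations takes prescribed values. Well-definedness under the moves of Figure~\ref{fig:rigid_moves} is a separate matter, and I would not address it here.

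\emph{Step 1: eliminating rigid crossings.} Suppose $S\neq\emptyset$ and pick $x\in S$. Depending on its sign, the local picture at $x$ is $L_\ast^{+}$ or $L_\ast^{-}$. The rigid crossing relation writes $P_R(D,S)$ as a linear combination of two terms. The first is $P_R$ of the diagram with $x$ smoothed, which has stuck set $S\setminus\{x\}$. The second is $P_R$ of the diagram in which $x$ becomes a classical crossing of the same sign, that is, $P_R(\mathcal{U}_x(D,S))$. Both diagrams have rigidity level $|S|-1$, so the induction hypothesis determines both values. Iterating this $|S|$ times expresses $P_R(D,S)$ as an explicit $\mathbb{Z}[t,r^{\pm1}]$-combination of at most $2^{|S|}$ classical diagrams. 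Different choices of the order in which stuck crossings are expanded give the same result, because the relations are applied to disjoint local pieces and the expansions commute. Indeed the final expression is the sum over states $\sigma\in\{0,\text{cl}\}^S$ of $\prod_{x} w_x(\sigma)\,P_R(D_\sigma)$, which is manifestly independent of the order.

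\emph{Step 2: classical diagrams.} On the stratum $\mathcal{R}_0$ we use the standard argument for HOMFLYPT uniqueness. Choose base points and an ordering of components and walk along the diagram. At the first crossing that is met first as an undercrossing, the skein relation solved for $P_R(L_\pm)$ trades the diagram for its crossing-switched version (same $c(D)$, strictly closer to descending) and a smoothing (fewer crossings). A fully descending diagram is a split union of unknots. Using $P_R(\bigcirc)=1$ and the disjoint-union rule, its value is $\bigl(\frac{a-a^{-1}}{z}\bigr)^{\mu-1}$, where $\mu$ is the number of components. This uses invariance of $P_R$ under classical Reidemeister moves on $\mathcal{R}_0$, which holds by the Remark identifying $P_R|_{\mathcal{R}_0}$ with the classical HOMFLYPT polynomial. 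The inner induction runs on $c(D)$ together with the number of "bad" crossings.

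\emph{Main obstacle.} The difficulty is that uniqueness of the classical reduction rests on $P_R$ being invariant on classical diagrams, so that descending diagrams can be identified with unlinks. Without that, the recursion only expresses everything in terms of values on descending diagrams. I would handle this by invoking the classical HOMFLYPT existence and uniqueness theorem for $\mathcal{R}_0$ and treating it as the base case. The honest content of the proposition is then Step 1: the rigid relations reduce every stuck diagram to classical ones canonically, independent of the order of expansion.
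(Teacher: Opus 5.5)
Your proposal is correct and follows essentially the same route as the paper. Both arguments use a well-founded lexicographic induction in which the rigid relations strip off stuck crossings, and then hand the classical stratum to the standard HOMFLYPT reduction. The paper orders by $(N,s)$ with $N=c+s$ and you order by $(|S|,c)$; both work, since smoothing a stuck crossing lowers $N$ and unsticking it lowers $s$, while both operations lower $|S|$.

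Your version is more careful than the paper's in two respects. First, the paper claims the classical skein step produces only ``an unknot and diagrams with strictly fewer crossings,'' which skips the crossing-switch (descending-diagram) induction. The switched diagram has the same $(N,0)$, so it is not lower in the paper's order, and that step depends on classical isotopy invariance; you correctly flag both points. Second, your state-sum remark shows that the expansion does not depend on the order in which stuck crossings are resolved, which the paper does not address.
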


\begin{proof}
The proof proceeds by induction on the pair $(N, s)$ under lexicographic ordering, where $N = c + s$ is the total number of crossings (the sum of classical crossings $c$ and rigid-height vertices $s$).

\begin{itemize}
    \item \textbf{Base Case:} If $N=0$, the diagram $D$ contains no crossings of any type. In this case, $D$ is an unlink. Its value is uniquely determined by the normalization $P_R(\bigcirc)=1$ and the disjoint union rule $P_R(L \sqcup \bigcirc) = \frac{a-a^{-1}}{z} P_R(L)$.

    \item \textbf{Inductive Step (Stuck crossings):} If $s > 0$, choose a rigid-height vertex $L_{\ast}^{\pm}$. Applying the rigid crossing relation expresses $P_R(L_{\ast}^{\pm})$ as a linear combination:
    \[
    P_R(L_{\ast}^{\pm}) = t\,P_R(L_0) + r^{\pm 1}P_R(L_{\pm}).
    \]
    The diagram $L_0$ has total crossings $N-1$, and the diagram $L_{\pm}$ has total crossings $N$ but with $s-1$ rigid-height vertices. In both cases, the resulting diagrams are strictly lower in the lexicographic order $(N, s)$.

    \item \textbf{Inductive Step (Classical crossings):} If $s = 0$ and $N > 0$, the diagram contains only classical crossings. We follow the standard unknotting algorithm: by choosing a base point and traversing the knot, we can express $P_R(D)$ via the classical skein relation 
    \[
    aP_R(L_+) - a^{-1}P_R(L_-) = zP_R(L_0)
    \]
    as a linear combination of an unknot and diagrams with strictly fewer crossings ($N-1$). Since $s=0$ for all diagrams in this step, they remain lower in the lexicographic order.
\end{itemize}

Since the lexicographic order on $\mathbb{N} \times \mathbb{N}$ is well-founded, the recursive process terminates in a finite number of steps. Thus, $P_R(D)$ is uniquely determined as a polynomial in $\mathbb{Z}[a^{\pm 1}, z^{\pm 1}, t, r^{\pm 1}]$.
\end{proof}

Uniqueness ensures that the recursive relations determine a function on diagrams. We now verify that this function is invariant under stuck isotopy, and therefore defines an invariant of stuck knots rather than of particular diagrammatic presentations.

\begin{theorem}
The polynomial $P_R$ is invariant under stuck isotopy.
\end{theorem}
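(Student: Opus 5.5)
The plan is to reduce invariance of $P_R$ to the classical invariance of the HOMFLYPT polynomial $P$ by means of a state expansion, and then to check the moves of the Reidemeister-type theorem for stuck knots one at a time.

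\textbf{Step 1: state expansion.} Let $D$ be a stuck diagram with rigid-height vertices $x_1,\dots,x_s$. At each vertex we apply the rigid crossing relation, which replaces $x_j$ either by the oriented smoothing $L_0$ (weight $t$) or by the classical crossing of the same sign $L_{\pm}$ (weight $r^{\pm1}$). The relations at different vertices are local and commute, so this gives
\[
P_R(D)=\sum_{\sigma\in\{0,1\}^s} t^{\,|\sigma^{-1}(0)|}\prod_{j:\sigma_j=1} r^{\epsilon_j}\,P(D_\sigma).
\]
Here $D_\sigma$ is the classical diagram obtained by smoothing the vertices with $\sigma_j=0$ and converting the vertices with $\sigma_j=1$ into classical crossings, and $\epsilon_j=\pm1$ is the sign of $x_j$. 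The remark that $P_R$ restricted to $\mathcal{R}_0$ is the classical polynomial, together with the normalization $P_R(L\sqcup\bigcirc)=\frac{a-a^{-1}}{z}P_R(L)$ (the HOMFLYPT value), shows that on classical diagrams $P_R=P$.

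This formula has two uses. First, it shows that the recursion in the preceding proposition is independent of the order in which crossings are resolved, so $P_R$ is a well-defined function on diagrams and not merely uniquely determined. Second, it reduces invariance to the following claim. For each local move $D\to D'$ in the list of the Reidemeister-type theorem, there is a sign-preserving bijection of vertices such that, for each $\sigma$, the classical diagrams $D_\sigma$ and $D'_\sigma$ are classically isotopic.

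\textbf{Step 2: planar isotopies and classical moves.} Planar isotopies and classical Reidemeister moves performed away from stuck crossings act identically on every state $D_\sigma$. Hence each term $P(D_\sigma)$ is unchanged, because $P$ is a classical invariant.

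\textbf{Step 3: rigid-vertex moves.} There are two kinds of rigid-vertex moves to check.

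\emph{Strand passing a vertex.} This is the rigid analogue of Reidemeister III, in which a strand slides over or under a vertex. In the state $\sigma_j=1$ it becomes a classical Reidemeister III move. In the state $\sigma_j=0$ the strand slides past two disjoint arcs, which is a pair of Reidemeister II moves and planar isotopy.

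\emph{Moving a crossing through a vertex.} This is the twist-type move in which a classical crossing on the two strands incident to the vertex is moved from one side of the vertex to the other, analogous to the singular move $\sigma_i\tau_i=\tau_i\sigma_i$. I would write both sides locally as two-strand braid words. In the state $\sigma_j=1$ both sides become $\sigma_i^{\pm1}\sigma_i^{\epsilon}$, and these are equal. In the state $\sigma_j=0$ the vertex becomes the identity on two strands, so both sides become $\sigma_i^{\pm1}$.

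In every case we must check that the move preserves the sign $\epsilon_j$ of the vertex. This is exactly where the height ordering enters, and it is why Reidemeister I moves at vertices are excluded.

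\textbf{Main obstacle.} The delicate point is Step 3 for the full list of oriented versions of the rigid moves in Figure~\ref{fig:rigid_moves}. The smoothing $L_0$ requires coherent orientations. For moves in which the incident strands are not braid-like (that is, not parallel-oriented), I would first reduce to braid-like versions using classical Reidemeister II moves, which are already handled in Step 2. I would then check, case by case, that the smoothed states on both sides agree up to classical isotopy. If some orientation case fails, the fallback is to restrict the set of allowed moves to those for which the check succeeds, since these are the moves the definition was designed to respect.
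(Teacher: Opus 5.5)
Your proposal is correct and takes the same route as the paper: resolve each rigid vertex by the rigid relation into the oriented smoothing and the unstuck crossing of the same sign, then invoke classical HOMFLYPT invariance on each resulting term (your two rigid move types are the paper's S2 and S3 moves), and your global state expansion also supplies the well-definedness that the paper's uniqueness proposition leaves implicit. The orientation worry in your last paragraph does not arise, so no fallback is needed: each rigid move is realized by an ambient isotopy carrying a small ball around the vertex rigidly onto a ball around the new vertex, which sends the unstuck crossing to the unstuck crossing of the same sign and the oriented smoothing to the oriented smoothing, so $D_\sigma$ and $D'_\sigma$ are classically isotopic in every orientation case.
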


\begin{proof}
We verify that the defining relations are preserved under the moves generating stuck isotopy.

\begin{itemize}
    \item Reidemeister 2 and 3 away from rigid crossings: On diagrams containing only classical crossings, the relations reduce to those of the HOMFLYPT polynomial, whose invariance under these moves is well known.
    \item Reidemeister 1: The normalization $P_R(L\sqcup\bigcirc)=\frac{a-a^{-1}}{z}P_R(L)$ ensures invariance under the introduction or removal of a twist, exactly as in the classical case.
    \item Moves involving rigid crossings: Suppose a diagram 
 undergoes an isotopy move involving a rigid-height vertex, such as an S2 move. As illustrated in Figure~\ref{fig:HOMS2}, applying the rigid relation $P_R(L_{\ast}^{\pm}) = t P_R(L_0) + r^{\pm 1} P_R(L_{\pm})$ reduces the diagram to a linear combination of a smoothed crossing and a classical crossing. Since both the smoothing $L_0$ and the classical crossing $L_{\pm}$ are both individually invariant under the corresponding classical Reidemeister moves, their linear combination remains invariant. An analogous argument applies to the S3 moves (an arc sliding across a rigid vertex), where the rigid relation commutes with the ambient isotopy.
\end{itemize}

Since all generating moves preserve the polynomial, $P_R$ is invariant under stuck isotopy.
\end{proof}

\begin{figure}[ht]
\centering
\includegraphics[width=0.85\textwidth]{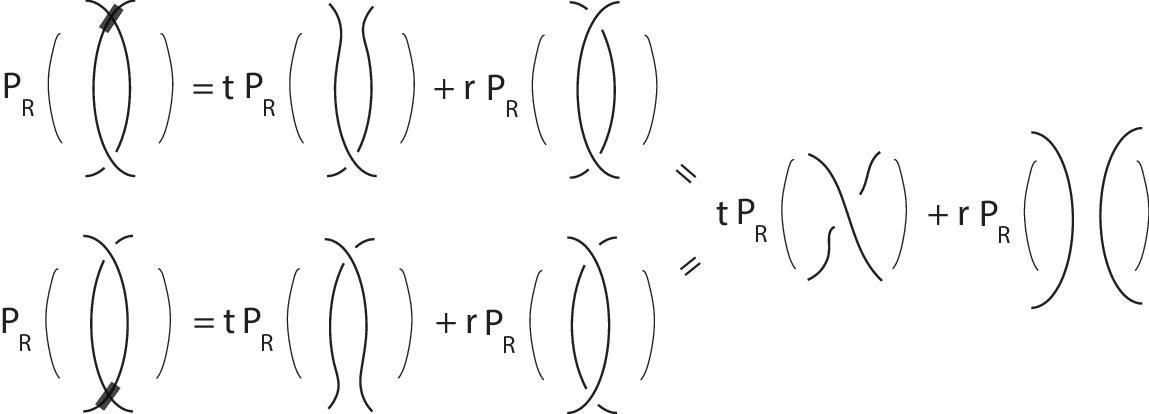}
\caption{Invariance under S2 move.}
\label{fig:HOMS2}
\end{figure}


Beyond invariance, the polynomial reflects the rigidity hierarchy. Each rigid crossing contributes independent algebraic data to the skein expansion, endowing the invariant with a natural grading by rigidity.

\begin{proposition}
Let $K^* \in \mathcal{SK}$ be a stuck knot with $k$ rigid-height vertices. Every monomial $m$ appearing in the polynomial $P_R(K^*)$ satisfies:
\[
\deg_{\{t, r\}}(m) \le k,
\]
where $\deg_{\{t, r\}}$ denotes the total degree in the variables $t$ and $r$.
\end{proposition}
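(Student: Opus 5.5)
The plan is an induction that follows the recursive evaluation used in the uniqueness proposition for $P_R$. I would prove the stronger, diagram-level statement: for every stuck knot diagram $D$ with $s$ rigid-height vertices, every monomial of $P_R(D)$ has total $\{t,r\}$-degree at most $s$. Here $r^{-1}$ is read as having degree $-1$, so degree means the total exponent in $t$ and $r$. Since $P_R$ is a stuck isotopy invariant and the rigidity level $k$ is well-defined on stuck knot types (\S\ref{subsec:filtration}), applying this to any diagram of $K^*$ gives the proposition.

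The induction runs on the same lexicographic order on $(N,s)$ as in the uniqueness proof.

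\textbf{Base case, $s=0$.} The relations reduce to the classical HOMFLYPT skein relation together with the normalisations. Classical smoothings and crossing changes never create rigid vertices. So $P_R(D)$ is the classical HOMFLYPT polynomial, lies in $\mathbb{Z}[a^{\pm1},z^{\pm1}]$, and has $\{t,r\}$-degree $0$. This covers the case $N=0$ as well.

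\textbf{Inductive step, $s>0$.} Choose a rigid vertex and apply
\[
P_R(L_\ast^{\pm}) = t\,P_R(L_0) + r^{\pm1}P_R(L_\pm).
\]
Both $L_0$ and $L_\pm$ have exactly $s-1$ rigid vertices, since the other rigid vertices are left untouched. Both are also lower in the order, so by induction their monomials have degree at most $s-1$. Multiplying by $t$, or by $r^{+1}$, raises the degree by exactly $1$, giving at most $s$. Multiplying by $r^{-1}$ lowers it, giving at most $s-2$. Every monomial of $P_R(D)$ is a sum of such terms, so its degree is at most $s$.

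\textbf{Main obstacle.} The expansion gives one value of $P_R(D)$, but uniqueness says the relations determine a single polynomial. I would therefore fix this particular expansion order and then cite the uniqueness proposition to conclude the bound holds for $P_R(D)$ itself. A second subtlety is that some monomials may cancel, but cancellation can only remove monomials and never raises the degree.

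If the degree is instead meant to count the absolute value of the exponent of $r$, the same induction still works. Each step multiplies by at most one factor of $t$ or $r^{\pm1}$, so in any monomial the number of $t$ factors plus $|$exponent of $r|$ is at most $s$, and this again bounds the degree by $k$.
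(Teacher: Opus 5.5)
Your proof is correct and is essentially the paper's argument: induction on the number of rigid vertices via $P_R(L_\ast^{\pm}) = t\,P_R(L_0) + r^{\pm1}P_R(L_\pm)$, using that both resolved diagrams have one fewer rigid vertex, and checking that multiplication by $t$ or $r$ raises the degree by one while $r^{-1}$ lowers it. Your additional points spell out what the paper leaves implicit. These are: working at the diagram level so that links such as $L_0$ are covered, noting that cancellation cannot raise degree, and the variant in which the exponent of $r$ is counted in absolute value.
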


\begin{proof}
We proceed by induction on $k$, the number of rigid-height vertices.
\begin{itemize}
    \item \textbf{Base Case ($k=0$):} $P_R(K^*)$ is the classical HOMFLYPT polynomial in $a$ and $z$. Since $t$ and $r$ do not appear, the degree is $0 \le 0$.
    \item \textbf{Inductive Step:} Suppose the inequality holds for $k-1$ rigid vertices. For a knot with $k$ vertices, we apply the rigid relation:
    \[
    P_R(L_{\ast}^{\pm}) = t\,P_R(L_0) + r^{\pm 1}P_R(L_{\pm}).
    \]
    By the inductive hypothesis, the monomials in $P_R(L_0)$ and $P_R(L_{\pm})$ have degree at most $k-1$. 
    \begin{itemize}
        \item Multiplying by $t$ results in a degree at most $(k-1) + 1 = k$.
        \item Multiplying by $r$ results in a degree at most $(k-1) + 1 = k$.
        \item Multiplying by $r^{-1}$ results in a degree at most $(k-1) - 1 = k-2$.
    \end{itemize}
    In all cases, the degree of any resulting monomial $m$ satisfies $\deg_{\{t, r\}}(m) \le k$.
\end{itemize}
\end{proof}

\begin{remark}
The recursive definition of $P_R$ follows a general schema known as \emph{tangle insertion}, introduced by Henrich and Kauffman \cite{HenrichKauffman2017}. In their framework, singular vertices or unknown crossings are resolved by inserting a specific set of tangles to generate invariants for singular knots and rigid spatial graphs. Our rigid relation can be viewed as a specialized instance of this schema, tailored to the unique height-ordering of stuck knots.
\end{remark}


We now present several examples illustrating how rigidity contributes information beyond classical knot type.

\begin{example}\label{curl_stuck_hom}
Let $D$ be a diagram consisting of a single positive stuck crossing $L_{\ast}^{+}$ on a trivial loop. The rigid skein relation yields
\[
P_R(D) = t\,P_R(L_0) + r\,P_R(L_+).
\]
Here $L_0$ is the smoothing of the stuck crossing, which is the $2$-component unlink, and $L_+$ is the diagram obtained by replacing the stuck crossing with a classical crossing (a positive kink). Since $L_+$ is a classical diagram of the unknot and $P_R$ agrees with the classical HOMFLYPT polynomial on diagrams without stuck crossings, we have $P_R(L_+)=P(\bigcirc)=1$. Using
\[
P_R(L\sqcup \bigcirc)=\frac{a-a^{-1}}{z}\,P_R(L),
\]
we obtain
\[
P_R(D) = t \left( \frac{a - a^{-1}}{z} \right) + r.
\]
\end{example}

Notably, the polynomial distinguishes this diagram from the trivial knot despite their classical equivalence, illustrating that rigidity contributes genuinely new information.

\begin{example}
Let $K$ be a classical knot diagram and let $K_x^\ast$ be the stuck knot obtained by declaring one crossing of $K$ to be a rigid-height vertex of type $L_{\ast}^{+}$. The rigid skein relation yields
\[
P_R(K_x^\ast) = t\,P_R(L_0) + r\,P_R(L_+),
\]
where $L_+$ is the original classical knot $K$ and $L_0$ is its smoothing at $x$. Since $t$ and $r$ are independent parameters, the resulting polynomial $P_R(K_x^\ast)$ cannot be expressed solely in terms of $a$ and $z$ unless $t=0$ and $r=1$. This demonstrates that $P_R$ detects local rigidity independently of the underlying classical knot type $\pi(K_x^\ast)$.
\end{example}

These examples indicate that rigidity introduces genuinely new degrees of freedom into the skein framework, indicating that the polynomial detects features not visible to classical invariants.

\begin{remark}
The structure of the skein relations hints at a broader algebraic framework. The skein relations underlying $P_R$ suggest that stuck links may admit a skein module formulation analogous to the classical setting. While we do not pursue this direction here, it provides a natural framework for future investigation.
\end{remark}


\section{A State-Sum Invariant for Stuck Knots}\label{sec:KaufBr}

The rigidity filtration introduced in \S~\ref{subsec:filtration} indicates that invariants for stuck knots should encode not only classical crossing data but also the persistence of rigid-height vertices. While the HOMFLYPT-type polynomial incorporates rigidity through modified skein relations, state-sum models provide a complementary approach in which rigid vertices are retained as local states within the expansion.

State-sum constructions translate local geometric structure into computable global data and play a fundamental role in knot theory. In the present setting, they offer a natural mechanism for ensuring that rigidity remains visible at every stage of the expansion. We therefore construct a bracket type invariant for stuck knots generalizing the classical Kauffman bracket \cite{Kauffman1987}. The key idea is that classical crossings admit smoothing operations, whereas rigid vertices are retained as structural states reflecting their role as locally non-separable attachments in the spatial embedding. As a result, rigidity remains visible through the state expansion rather than disappearing during resolution.

The resulting polynomial is designed to reflect both classical entanglement and the structural constraints imposed by rigid attachment.


\subsection{The State Model}

We now formalize the propagation of rigidity through a state expansion. Let $D$ be a diagram of a stuck knot $K^*$. Each classical crossing admits the usual $A$ and $B$ smoothings. At a stuck crossing we introduce a third local state representing the persistent, non-separable contact between strands.

\begin{figure}[ht]
\centering
\includegraphics[width=0.65\textwidth]{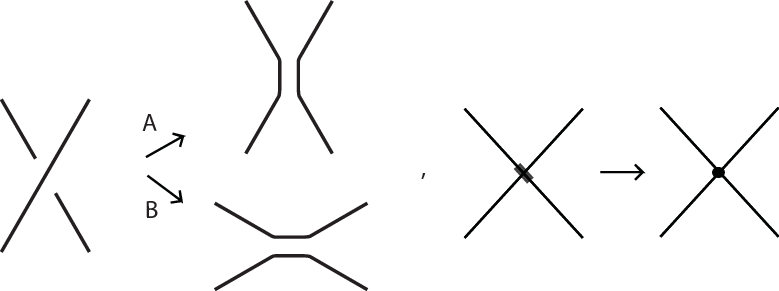}
\caption{Local states at a crossing: $A$ smoothing, $B$ smoothing, and the rigid vertex state $V$.}
\label{fig:states}
\end{figure}

Accordingly, each crossing of $D$ is resolved according to one of the following local rules:

\begin{enumerate}
\item $A$ smoothing,
\item $B$ smoothing,
\item rigid vertex state $V$ (and is the only permitted local state at stuck crossings).
\end{enumerate}

Resolving the classical crossings produces a collection of simple closed curves together with rigid vertices that persist throughout the state. Note that in the $V$ state, the stuck crossing is kept as a rigid $4$-valent vertex with the same height ordering as in $D$. 

\smallbreak 

To each local resolution we assign multiplicative weights.

\[
A\text{\, smoothing}: A, \qquad 
B\text{\, smoothing}: A^{-1}, \qquad 
V\text{\, state}: R,
\]

where $A$ and $R$ are independent indeterminates.

\begin{definition}
A \emph{state} $s$ of $D$ is a choice of local resolution at every crossing.  
Define the weight of $s$ by
\[
w(s)=A^{\alpha(s)-\beta(s)}R^{\nu(s)},
\]
where $\alpha(s)$ and $\beta(s)$ denote the numbers of $A$ and $B$ smoothings, respectively, and $\nu(s)$ counts rigid vertex states.
\end{definition}

After resolving the classical crossings, each state $s$ is realized as a spatial graph consisting of embedded arcs joined at the persistent rigid vertices. Since each rigid vertex is $4$-valent, the resulting state is a collection of one or more connected components. Let $|s|$ denote the number of these connected components.

\begin{definition}
The \emph{stuck bracket} of a diagram $D$ is defined by the state sum:
\[
\langle D \rangle_R = \sum_{s} w(s) \,\delta^{|s|-1}, \qquad \delta = -A^2 - A^{-2},
\]
where the sum ranges over all possible states $s$ of the diagram $D$.
\end{definition}

Note that since each classical crossing admits two resolutions and rigid crossings admit one, the number of states is finite.

\begin{remark}
When no rigid crossings are present, the state model reduces exactly to the classical Kauffman bracket. Thus the stuck bracket generalizes the classical Kauffman bracket by adjoining a multiplicative parameter recording the presence of rigid vertices.
\end{remark}

\begin{figure}[ht]
\centering
\includegraphics[width=0.5\textwidth]{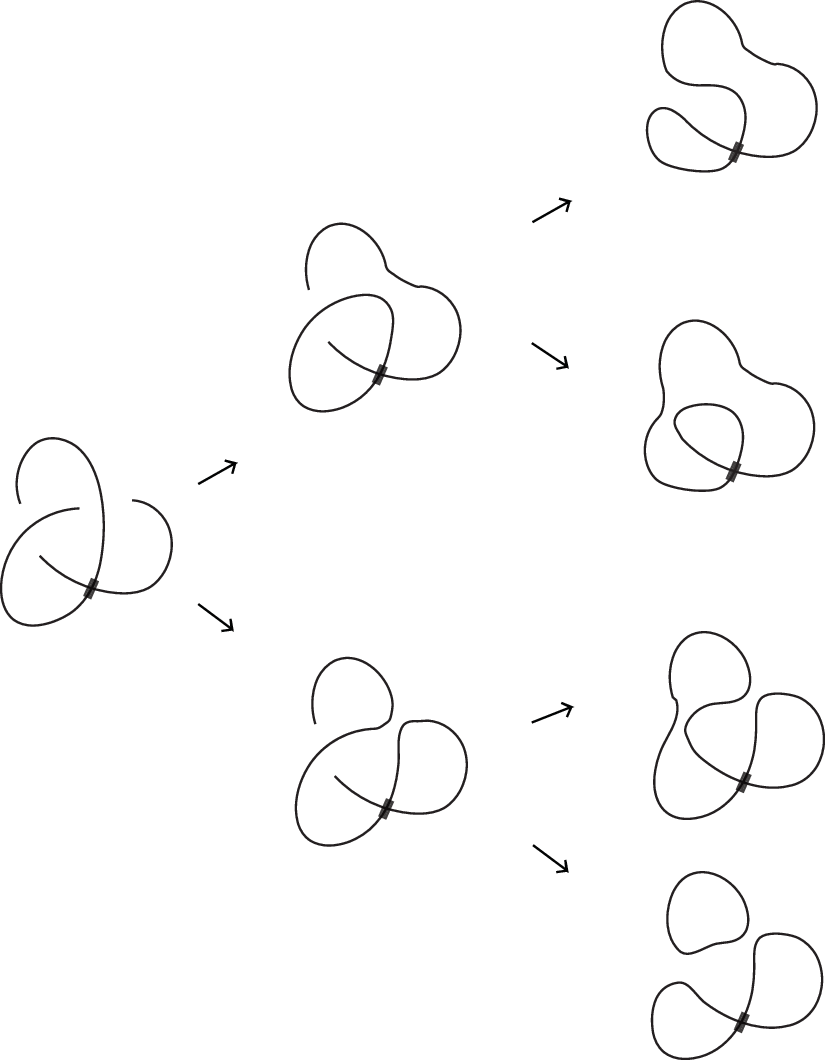}
\caption{Example of a state obtained by resolving classical crossings while retaining rigid vertices.}
\label{fig:state_example}
\end{figure}


As in the classical theory, the bracket $\langle D \rangle_R$ is invariant under Reidemeister moves 2 and 3 but fails to be invariant under Reidemeister 1. To obtain an ambient isotopy invariant, we normalize using the writhe.

Let $w(D)$ denote the writhe of $D$, counting only classical crossings. Rigid vertices contribute no writhe since they do not represent oriented crossings.

\begin{definition}
Define the polynomial
\[
P_{K^*}(A,R)
=
(-A^3)^{-w(D)}\langle D\rangle_R.
\]
\end{definition}

\begin{theorem}\label{invariant}
The polynomial $P_{K^*}(A,R)$ is invariant under stuck isotopy.
\end{theorem}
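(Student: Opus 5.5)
The plan is to reduce invariance to a local check of each move in the analogue of the Reidemeister theorem for stuck knots: planar isotopy, classical Reidemeister moves away from stuck crossings, and the rigid-vertex moves of Figure~\ref{fig:rigid_moves}. Planar isotopy preserves states, weights and component counts, so it needs no argument. The key structural fact I would establish first is a \emph{locality} property of the stuck bracket. Since stuck crossings admit only the $V$ state with weight $R$, every state of $D$ carries the factor $R^{s}$, where $s=|S|$. The sum is then effectively a Kauffman bracket over the classical crossings, evaluated on the $4$-valent graph obtained by keeping the rigid vertices. Hence $\langle\,\cdot\,\rangle_R$ satisfies the usual local expansions $\langle \text{crossing}\rangle_R = A\langle \text{A-smoothing}\rangle_R + A^{-1}\langle \text{B-smoothing}\rangle_R$ at any classical crossing. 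It also satisfies $\langle D\sqcup\bigcirc\rangle_R=\delta\langle D\rangle_R$, because $|s|$ counts connected components of the state graph, rigid vertices included.

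For the classical moves performed away from stuck crossings, I would note that the Kauffman computations for R2 and R3 use only the local skein expansion and the loop value $\delta$. Both are available here, with rigid vertices appearing only as inert decorations outside the disk. So $\langle D\rangle_R$ is invariant under R2 and R3. For R1, the standard computation gives a factor $-A^{\pm3}$ and the writhe changes by $\pm1$, so $P_{K^*}$ is invariant. The writhe counts only classical crossings, and these moves create or delete only classical crossings.

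The main work, and the step I expect to be the obstacle, is the rigid-vertex moves. There are two kinds. The first is a strand passing entirely over or entirely under a rigid vertex, the S3-type move. The second is an R2-type move of two classical crossings across a vertex, or the twist/flype move that rotates a rigid vertex and exchanges classical crossings adjacent to it. For the S3-type move I would expand both sides at the classical crossings the moving strand makes with the four legs of the vertex. I would then match states on the two sides by an isotopy of the resulting state graphs fixing the vertex. This works because the strand is entirely over or entirely under, so corresponding smoothings give isotopic graphs with the same component count and the same weights. For the twist-type moves, I would check that the number of classical crossings is preserved along with their signs, since rotating the rigid vertex carries the crossing sign along with the strands. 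Then $w(D)$ is unchanged. The bracket states then correspond bijectively, because the $V$ state keeps the vertex as a single graph vertex and the component count does not see how the vertex is embedded.

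The delicate point is whether the rigid-vertex moves in Figure~\ref{fig:rigid_moves} include a move that changes $w(D)$ without a compensating classical R1. I would verify move by move that the classical-crossing writhe is unchanged. Should a move alter writhe by a full twist of the vertex, I would use the fact that the component count is insensitive to the twist, so an extra factor $(-A^3)^{\pm1}$ could not be absorbed. That would show such a move cannot be allowed in the stuck category, consistent with the remark forbidding R1 at rigid vertices. Combining these cases with the Reidemeister-type theorem gives invariance of $P_{K^*}(A,R)$.
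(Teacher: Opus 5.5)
\textbf{The step you flagged as the obstacle fails, and it cannot be repaired: the statement is false for the bracket as defined.}

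Your outline is the same as the paper's proof. You reduce to the generating moves, take R2 and R3 from the classical bracket and R1 from the writhe, and claim a weight-preserving bijection of states for the moves at rigid vertices. The classical moves do go through, and so does the vertex-twist move: the two cup--cap states there are different graphs, but they induce the same partition of the boundary points, which is all $|s|$ sees.

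The failing step is a strand $h$ sliding over a stuck vertex $V$. After smoothing, pieces of $h$ are attached to legs of $V$, so ``$h$ is entirely over'' no longer gives any isotopy between corresponding states. In Kauffman's R3 argument one expands the third crossing and reduces to R2. Here the vertex is never expanded, and the four-endpoint block it creates is not a combination of planar matchings, so there is nothing to reduce to.

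Concretely, $|s|$ depends only on which endpoints are joined. So a state of the local tangle is a partition of $\{w,e,a_1,a_2,b_1,b_2\}$, where $w,e$ are the west and east ends of $h$, $a_1,a_2$ are the ends of the upper-left and upper-right legs, and $b_1,b_2$ are the ends of the lower-left and lower-right legs. All four crossings have the same unoriented picture (horizontal over vertical). Let $X$, joining west to north and east to south, be the $A$-smoothing; the other convention just replaces $A$ by $A^{-1}$. The weighted states are
\begin{align*}
h\text{ over }a_1,a_2:\quad & A^{2}\{w,a_1\}\{a_2,e,b_1,b_2\}+A^{-2}\{a_2,e\}\{w,a_1,b_1,b_2\}\\
 & +\{w,a_1\}\{a_2,e\}\{b_1,b_2\}+\{a_1,a_2\}\{w,e,b_1,b_2\},\\
h\text{ over }b_1,b_2:\quad & A^{2}\{w,a_1,a_2,b_1\}\{e,b_2\}+A^{-2}\{a_1,a_2,e,b_2\}\{w,b_1\}\\
 & +\{w,e,a_1,a_2\}\{b_1,b_2\}+\{a_1,a_2\}\{w,b_1\}\{e,b_2\}.
\end{align*}
Already the two $A^2$ states induce different partitions, so there is no bijection of the kind you (and the paper) claim.

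Now close both tangles outside the disk by an arc from $e$ to $a_2$ and a second stuck crossing $V'$ whose strands join $a_1$ to $b_1$ and $w$ to $b_2$. Call the results $D_\uparrow$ (with $h$ over $a_1,a_2$) and $D_\downarrow$ (with $h$ over $b_1,b_2$); they differ by exactly one S3 move. Each is a knot, traversed as $w\to e\to a_2\to V\to b_1\to V'\to a_1\to V\to b_2\to V'\to w$, with two stuck and two classical crossings. Along this traversal $h$ runs eastward and every leg it crosses runs downward. Hence all classical crossings have the same sign and $w(D_\uparrow)=w(D_\downarrow)$. Counting components gives
\begin{align*}
\langle D_{\uparrow}\rangle_R&=R^2\bigl(A^2+A^{-2}\delta+\delta+1\bigr)=-R^2\bigl(A^{-2}+A^{-4}\bigr),\\
\langle D_{\downarrow}\rangle_R&=R^2\bigl(A^2+A^{-2}+1+1\bigr)=R^2\bigl(A+A^{-1}\bigr)^2 .
\end{align*}
Their ratio $-A^{-3}/(A+A^{-1})$ is not even a Laurent polynomial, so no normalization can fix it.

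This failure is easy to miss. If the rest of the diagram has no stuck crossing, the outside state is a planar matching, and the two sides agree on all five planar matchings of the six endpoints. It is the four-point block contributed by $V'$ that breaks invariance.

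Reading $|s|$ instead as the number of curves obtained by running straight through each vertex fails on the same pair: it gives $R^2A^{-6}$ versus $R^2(1+A^{-2}-A^4)$.

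The paper's proof asserts the same ``one-to-one correspondence between states with identical weights'' without checking it, so it has the same hole. A genuine invariant along these lines needs a different vertex state. One option is to replace $V$ by a fixed linear combination of planar tangles, as in the Henrich--Kauffman tangle-insertion schema.
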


\begin{proof}
We verify invariance under the generating moves of stuck isotopy.

\begin{itemize}
\item Reidemeister 2 and 3 away from rigid vertices: These follow from the classical bracket argument \cite{Kauffman1987}, since the local state expansions are unchanged.
\item Moves involving rigid vertices: Admissible isotopies preserve rigid vertices and do not separate the incident strands. Each rigid move acts locally and preserves the rigid vertex together with its incident arcs, thereby inducing a one-to-one correspondence between states with identical weights.
\item Reidemeister 1: Since rigid vertices do not admit Reidemeister 1 moves, all twist contributions arise from classical crossings. A twist introduces the usual multiplicative factor in the bracket, which is exactly cancelled by the writhe normalization. Hence the polynomial is invariant under all allowed moves.
\end{itemize}
Hence $P_{K^*}(A,R)$ is an invariant of stuck knot types.
\end{proof}

\begin{remark}
Setting $R=1$ recovers the classical Kauffman bracket normalization, while $R=0$ emphasizes the contribution of persistent rigid vertices in the state expansion.
\end{remark}


\subsection{Properties of the Stuck Bracket}

We now examine several structural features of the stuck bracket. In particular, we show that the invariant detects rigidity, possesses a natural universality among state-sum constructions, and complements the oriented skein invariant introduced earlier.

\paragraph{Detection of Rigidity.}

The additional variable $R$ ensures that rigid vertices contribute explicitly to the invariant.

\begin{proposition}
Rigid vertices contribute explicitly to the polynomial through powers of $R$. In particular, if a diagram contains at least one rigid vertex, then $P_{K^*}(A,R)$ is generally non-constant as a polynomial in $R$, unless the $R$-dependent terms cancel identically.
\end{proposition}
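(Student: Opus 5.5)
Since the rigid vertex state $V$ is the only local state allowed at a stuck crossing, every state $s$ of $D$ resolves each rigid vertex to $V$. Hence $\nu(s)=k$ for every state, where $k$ is the number of rigid vertices of $D$. The plan is to exploit this uniformity directly.

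First, I would read off from the weight formula that
\[
w(s)=A^{\alpha(s)-\beta(s)}R^{k}
\]
for every state $s$. Factoring the common power of $R$ out of the state sum then gives
\[
\langle D\rangle_R=R^{k}\sum_{s}A^{\alpha(s)-\beta(s)}\delta^{|s|-1}.
\]
Multiplying by the writhe factor $(-A^3)^{-w(D)}$, which involves no $R$ because rigid vertices contribute nothing to the writhe, yields
\[
P_{K^*}(A,R)=R^{k}\,Q_D(A),
\qquad
Q_D(A)=(-A^3)^{-w(D)}\sum_{s}A^{\alpha(s)-\beta(s)}\delta^{|s|-1}.
\]
Here $Q_D$ is a Laurent polynomial in $A$ alone. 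This already establishes the first assertion: rigid vertices enter only through the explicit factor $R^{k}$.

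Second, for the "in particular" clause, I take $k\ge1$. Then $P_{K^*}$ is non-constant in $R$ exactly when $Q_D(A)\neq0$, and the only way the $R$-dependence can disappear is if $Q_D(A)$ vanishes identically. That case is precisely the cancellation exception allowed in the statement. Since $Q_D$ is a Laurent polynomial in $A$ alone, "generally non-constant" amounts to $Q_D\not\equiv0$, which is the generic situation. To show the exception is not forced, I would exhibit one example: a single rigid vertex on a trivial loop, the analogue of Example~\ref{curl_stuck_hom}. This has a unique state, giving $P=R$, whose dependence on $R$ is nontrivial.

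The main obstacle is interpretive rather than technical: the statement is hedged, so the proof must make precise what "generally" and "cancel identically" mean. I would resolve this by proving the sharp form, namely that $P$ is a monomial $R^{k}$ times a Laurent polynomial in $A$ that is independent of $R$. As a remark, I would also note that well-definedness of $k$ follows from the fact that stuck isotopy preserves the rigidity level. The $R$-degree $k$ of any nonzero $P_{K^*}$ is therefore itself an invariant.
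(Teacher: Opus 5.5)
Your proof is correct. It starts from the same observation as the paper's proof, namely that powers of $R$ enter only through $V$-states, but you carry it further and get a sharper result. The paper argues that states containing a rigid vertex carry a factor of $R$ while states arising solely from classical crossings do not, and concludes that rigidity appears through $R$-dependent terms. You note instead that $V$ is the only permitted resolution at a stuck crossing and classical crossings admit only $A$ and $B$, so every state has $\nu(s)=k$. Hence $R^k$ factors out of the entire state sum, giving $P_{K^*}(A,R)=R^k Q_D(A)$ with $Q_D$ free of $R$.

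This factorization gives you several things the paper's phrasing does not:
\begin{itemize}
\item When $k\ge 1$ there are no states free of rigid vertices, so the contrast drawn in the paper's proof never actually arises.
\item No cancellation between different powers of $R$ is possible.
\item The hedge ``unless the $R$-dependent terms cancel identically'' reduces to the single condition $Q_D\equiv 0$.
\item The variable $R$ records nothing beyond the rigidity level $k$, which is already an invariant of stuck isotopy, and all remaining information sits in the $R$-free factor $Q_D(A)$.
\end{itemize}
The paper's argument is shorter but leaves these points implicit.

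Your witness for non-constancy, the single rigid curl with $P=R$, is exactly the computation carried out in Example~\ref{ex:curl_vs_rigidcurl}.
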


\begin{proof}
Each state containing a rigid vertex contributes a factor of $R$, whereas states arising solely from classical crossings do not. Consequently, the state expansion records the presence of rigid vertices through $R$-dependent terms.
\end{proof}

\begin{example}[A classical curl vs.\ a rigid curl]\label{ex:curl_vs_rigidcurl}
Let $D_{\mathrm{cl}}$ be the standard diagram of the unknot with a single \emph{classical} positive curl (one classical crossing). Let $D_{\mathrm{rig}}$ be the diagram obtained from the same picture
by declaring that crossing to be a \emph{rigid-height vertex} (recall right hand side of Figure~\ref{fig:stuck_diagram}). These two diagrams
are classically equivalent (both represent the unknot under the rigidity-forgetting map), but they behave differently under the stuck bracket normalization.

\smallskip\noindent
\textbf{(i) The classical curl.}
In the classical Kauffman bracket, a positive curl contributes a factor $-A^3$:
\[
\langle D_{\mathrm{cl}}\rangle = -A^3\langle \bigcirc\rangle = -A^3.
\]
The writhe counts only classical crossings, so $w(D_{\mathrm{cl}})=+1$. Hence the normalized
polynomial is
\[
P_{\iota(\text{unknot})}(A,1)
= (-A^3)^{-w(D_{\mathrm{cl}})}\langle D_{\mathrm{cl}}\rangle
= (-A^3)^{-1}(-A^3)=1,
\]
as expected for the classical unknot (here we have written $R=1$ to emphasize reduction to the
classical normalization).

\smallskip\noindent
\textbf{(ii) The rigid curl.}
In $D_{\mathrm{rig}}$ there are no classical crossings at all (the unique crossing is rigid), so there are no $A/B$ choices and there is exactly one state. In that state, the rigid vertex persists and contributes a single factor of $R$. Moreover, the resolved diagram has one underlying component (ignoring the rigid vertex), so $|s|=1$ and $\delta^{|s|-1}=\delta^0=1$. Therefore
\[
\langle D_{\mathrm{rig}}\rangle_R = R.
\]
Since rigid vertices do not contribute to writhe, $w(D_{\mathrm{rig}})=0$, and thus
\[
P_{K^*}(A,R)
= (-A^3)^{-w(D_{\mathrm{rig}})}\langle D_{\mathrm{rig}}\rangle_R
= R.
\]

\smallskip
This computation illustrates that declaring a crossing rigid can change the invariant even when the
underlying classical knot type remains the unknot.
\end{example}

\begin{remark}
The stuck bracket may be viewed as a natural extension of the classical Kauffman bracket to diagrams containing rigid-height vertices. Once the local smoothing weights for classical crossings are fixed, introducing a multiplicative parameter for rigid vertices provides the simplest mechanism for incorporating rigidity into a state-sum framework. In this sense, the stuck bracket plays a role analogous to that of the Kauffman bracket in classical knot theory. This perspective suggests the possibility of developing skein modules \cite{P} adapted to embeddings with rigid vertices, a direction we do not pursue here.
\end{remark}

\paragraph{Comparison with the HOMFLYPT-Type Invariant.}

The state-sum invariant complements the HOMFLYPT-type polynomial constructed earlier. Although both encode rigidity, they arise from distinct algebraic frameworks. The HOMFLYPT-type polynomial extends oriented skein theory by modifying crossing relations to incorporate rigid-height structure. In contrast, the stuck bracket treats rigid vertices as persistent states within the resolution process, allowing their contribution to remain visible throughout the expansion rather than being absorbed into crossing data.

\begin{remark}
The complementary nature of the two constructions is already visible 
in Examples~\ref{curl_stuck_hom} and~\ref{ex:curl_vs_rigidcurl}. 
For the unknot containing a single rigid crossing, the HOMFLYPT-type 
polynomial records rigidity through modified skein relations involving 
classical smoothings, whereas the stuck bracket assigns a persistent 
multiplicative contribution arising directly from the rigid vertex state. 
Thus the former incorporates rigidity into crossing data, while the latter retains rigidity explicitly throughout the state expansion.
\end{remark}


\section{Stuck Knots as Rigid Spatial Graphs}\label{sec:SpatialGraphs}

The presence of stuck crossings places stuck knots naturally within the framework of spatial graph topology. Rather than viewing a stuck crossing as a decorated double point, it is more appropriately interpreted as a rigid vertex whose local structure must be preserved under ambient isotopy. This perspective situates stuck knots within the broader paradigm of topology under local constraints.

Rigid vertex spatial graphs were introduced to study embeddings of graphs in $S^3$ for which the cyclic ordering of edges at each vertex is fixed throughout isotopy.

\begin{definition}
A \emph{rigid spatial graph} is an embedding of a finite graph in $S^3$, considered up to ambient isotopy that preserves the cyclic ordering of edges at every vertex.
\end{definition}

Rigid vertex spatial graphs have been extensively studied, and several invariants have been developed to analyze their embeddings (see for example Yamada \cite{Yamada1989}). Foundational treatments of rigid vertex embeddings may be found in Kauffman \cite{Kauffman1991}.

Under this viewpoint, each stuck crossing corresponds to a rigid $4$-valent vertex equipped with height data specifying which incident strand passes over the other. The height ordering refines the usual rigid vertex structure by introducing directional information analogous to that carried by classical crossings.

\begin{figure}[ht]
\centering
\includegraphics[width=0.7\textwidth]{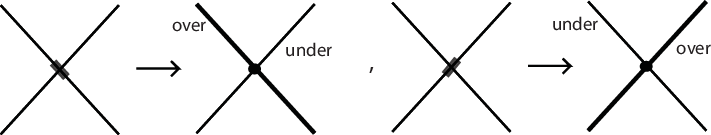}
\caption{A stuck crossing interpreted as a rigid $4$-valent vertex with height ordering.}
\label{fig:rigid_vertex}
\end{figure}

Consequently, every stuck knot naturally determines a rigid spatial graph whose rigid vertices occur precisely at the stuck crossings.

\begin{proposition}
There is a natural correspondence between stuck knots and rigid spatial graphs equipped with height-decorated $4$-valent vertices.
\end{proposition}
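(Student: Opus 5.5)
The plan is to make ``natural correspondence'' precise as a pair of mutually inverse maps between equivalence classes. Let $\mathcal{G}_h$ denote the set of isotopy classes of rigid spatial graphs whose vertices are all $4$-valent and each carry height data. The height data is an ordering of the two pairs of opposite edges, i.e.\ which transverse pair passes over. Isotopy here means ambient isotopy preserving the cyclic order at each vertex and the height decoration.

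\textbf{Map $\Phi:\mathcal{SK}\to\mathcal{G}_h$.} Given a spatial stuck knot $(f,S)$, take the graph $G_f$ whose vertices are the rigid vertices $f(p)$, $p\in S$, and whose edges are the arcs of $f(S^1)$ between consecutive rigid vertices. For a component with no rigid vertex, add one auxiliary degree-$2$ vertex, or equivalently allow circle components. At $f(p)$ the two arcs of the curve meet transversally inside $B_\varepsilon(f(p))$. This determines a cyclic order of the four half-edges and a partition into two opposite pairs, with the stuck height ordering as decoration. A stuck isotopy preserves each rigid-height vertex and its height ordering. It therefore preserves the local disc structure at each vertex, hence the cyclic ordering, and so it is an isotopy of decorated rigid spatial graphs. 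Thus $\Phi$ is well defined, in the same way as the proof that $\pi$ is well defined.

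\textbf{Inverse map $\Psi$.} Given a decorated rigid $4$-valent graph, at each vertex pair opposite half-edges using the rigid (planar) cyclic order. Following edges through vertices by always exiting via the opposite half-edge traces out immersed closed curves, which reparametrize as $S^1$ components meeting only at the vertices. This gives a map $f$ and stuck points $S$ with the prescribed height ordering. A rigid-graph isotopy preserving the decoration carries opposite pairs to opposite pairs, since cyclic order is preserved. It is therefore a stuck isotopy, so $\Psi$ is well defined. By construction, $\Psi\Phi$ and $\Phi\Psi$ are the identity on representatives, so they are the identity on classes.

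\textbf{Compatibility with diagrams (optional check).} One can also check that the correspondence matches diagrammatic moves. By the analogue of the Reidemeister theorem for stuck knots, stuck isotopy is generated by planar isotopy, classical Reidemeister moves away from stuck crossings, and rigid-vertex moves. These are precisely the rigid-vertex graph moves (as in Kauffman's theory) with the height decoration carried along.

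\textbf{Main obstacle.} The hard step is the vertex-local argument: a stuck isotopy preserves exactly the data of a rigid vertex, and conversely. The concern is that rigid-graph isotopy might allow the vertex disc to flip (a $180^\circ$ rotation) in a way that exchanges the opposite pairs or reverses the height data. I would handle this by fixing the convention that the height decoration is part of the vertex structure that the isotopy must preserve. Then check directly that flipping the rigid disc maps the decoration consistently, because over/under is defined relative to the disc's normal, which is carried along.

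Orientation is a secondary subtlety. If stuck knots are oriented, the correspondence should be taken with graphs whose edges are oriented coherently through opposite half-edges; this restricts to a bijection in the same way.
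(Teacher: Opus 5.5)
Your proposal is correct and shares the paper's core idea: replace each stuck crossing by a rigid $4$-valent vertex carrying the height datum, and observe that the admissible isotopies on the two sides coincide. The routes differ in two ways.

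First, the paper starts from diagrams. It gets the converse direction by projecting a rigid-graph isotopy to a sequence of admissible stuck moves. That tacitly uses a Reidemeister-type theorem for rigid-vertex graphs, and also the completeness of the move set in Figure~\ref{fig:rigid_moves} (including the vertex flip). You work spatially instead. There both equivalences are ambient isotopies subject to the same vertex constraints, so the converse is essentially tautological and no move-set completeness is needed.

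Second, the paper never constructs an inverse, so it leaves open which decorated graphs actually arise. Your straight-ahead-circuit map $\Psi$ fills this in and makes the correspondence an honest bijection.

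Two small tidy-ups:
\begin{enumerate}
\item $\Psi$ generally outputs stuck \emph{links}, not knots. Either let $\mathcal{SK}$ include links (as the paper tacitly does), or restrict $\mathcal{G}_h$ to graphs with a single straight-ahead circuit, which is an isotopy-invariant condition.
\item Your flip discussion is cleanest if the decoration is defined from the start as a marked pair of opposite sectors of the vertex disc (the paper's regional convention). That marking is intrinsic and is mapped to itself by a flip. A bare label ``this pair is over'' would instead be drawn with the opposite chirality after a flip. Your ``relative to the carried normal'' fix amounts to exactly the sector marking.
\end{enumerate}
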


\begin{proof}
Given a stuck knot diagram, replace each stuck crossing by a rigid $4$-valent vertex while preserving the height ordering of the incident arcs. Classical crossings remain transverse double points realized by the usual lifting construction. An ambient isotopy of stuck knots preserves rigid crossings together with their height data, and therefore induces an ambient isotopy of the associated spatial graph respecting vertex rigidity. Conversely, any such isotopy of the spatial graph projects to a sequence of admissible diagrammatic moves. Hence the construction is well defined.
\end{proof}

This interpretation shows that rigidity is an intrinsic feature of the spatial embedding rather than a diagrammatic embellishment: the allowable isotopies themselves are restricted by the presence of rigid vertices. Stuck knots may therefore be regarded as constrained spatial embeddings rather than decorated projections.

The additional height structure further distinguishes stuck knots from classical rigid vertex graphs, where no over-under ordering is specified. In this sense, stuck knots occupy an intermediate position between classical knot theory and rigid spatial graph theory, combining features of both while exhibiting rigidity-sensitive phenomena absent from either setting alone.

\begin{remark}
The rigid spatial graph interpretation suggests a complementary combinatorial viewpoint. 
Given a diagram $D$ of a stuck knot, one may associate a graph whose vertices correspond to crossings and whose edges record strand segments connecting crossings without intermediate intersections. Restricting to rigid crossings yields a subgraph that reflects how local constraints are distributed throughout the diagram.

Although this construction is diagram-dependent and not invariant under Reidemeister moves, it provides a natural language for describing regions in which rigidity is concentrated and for organizing the interaction between constrained crossings. Developing a systematic theory relating such combinatorial structures to simplification processes and unsticking phenomena presents an interesting direction for future work.
\end{remark}


\section{Rigidity Release and Unsticking Distance}\label{sec:Unsticking Distance}

Rigid-height vertices act as local constraints on ambient isotopy and may prevent classical simplification moves from occurring. In particular, configurations that would ordinarily cancel via Reidemeister moves can remain obstructed solely due to the presence of rigidity. This observation motivates enlarging the class of admissible diagrammatic operations. We formalize this idea through a relaxed equivalence framework in which rigid crossings may be irreversibly converted into classical crossings. The resulting notion leads naturally to a geometric measure of how strongly rigidity constrains a stuck knot.

\subsection{Relaxed Isotopy}

\begin{definition}
A \emph{relaxed isotopy} between stuck knot diagrams is a finite sequence of diagrams in which each step is either
\begin{enumerate}
\item a stuck isotopy move, or
\item an \emph{unstick move}, converting a rigid crossing into a classical crossing while preserving its over-under structure.
\end{enumerate}
Rigid crossings, once unstuck, remain classical throughout the sequence.
\end{definition}

Relaxed isotopy enlarges the classical move system by permitting the irreversible release of rigidity constraints. Classical Reidemeister moves may therefore occur after sufficient rigidity has been removed.

\begin{figure}[ht]
\centering
\includegraphics[width=0.85\textwidth]{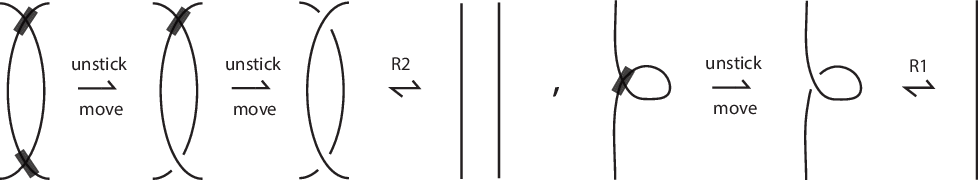}
\caption{Left: A pair of adjacent rigid crossings obstructing a Reidemeister 2 move. After unsticking both vertices, the crossings cancel. Right: A rigid twist that cannot be removed under stuck isotopy. Releasing the rigidity allows a Reidemeister 1 move to eliminate the crossing.}
\label{fig:unstick_RII}
\end{figure}

\begin{remark}
Relaxed isotopy should be viewed as a controlled relaxation of structural constraints rather than a change in the ambient isotopy class of the underlying knot. The purpose of the unstick move is not to alter the classical knot type, but to measure the extent to which rigidity obstructs diagrammatic simplification.
\end{remark}


\subsection{Unsticking Distance}

In this subsection we introduce a quantitative measure capturing how strongly rigidity constrains a stuck knot. Since both stuck isotopy and the unstick move preserve the underlying classical knot type, any sequence of relaxed moves can only relate stuck knots whose classical projections are ambient isotopic. Thus the notion of distance introduced below is meaningful within a fixed classical knot type.

\begin{definition}
Let $K_1^*$ and $K_2^*$ be stuck knot types. The \emph{unsticking distance} $u(K_1^*,K_2^*)$ is defined as the minimal number of unstick moves required in any relaxed isotopy connecting representatives of $K_1^*$ and $K_2^*$. If no such sequence exists, we set $u(K_1^*,K_2^*)=\infty$.
\end{definition}

\begin{remark}
Because the unstick move alters only the rigidity structure while preserving the ambient isotopy class of the underlying embedding, the quantity $u(K_1^*,K_2^*)$ is finite only when $K_1^*$ and $K_2^*$ share the same classical knot type. In particular, the distance from a stuck knot to the trivial stuck knot is finite only when the underlying classical knot is itself the unknot.
\end{remark}

Once selected rigid crossings are released, the resulting diagram contains only classical crossings in the affected regions. Classical Reidemeister moves may then be applied freely, allowing simplifications that were previously obstructed by rigidity.

We first record basic lower and upper bounds.

\begin{proposition}
Let $K^*$ be a stuck knot with $k$ rigid crossings whose underlying classical knot type is the unknot. Then
\[
0 \le u(K^*,\bigcirc) \le k.
\]
\end{proposition}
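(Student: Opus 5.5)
The lower bound is immediate from the definition: $u(K^*,\bigcirc)$ is the minimum number of unstick moves in a relaxed isotopy, hence a non-negative integer or $\infty$. So the real content is the upper bound. It also shows that the distance is finite, since the relaxed isotopy I construct below connects $K^*$ to $\bigcirc$.

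For the upper bound I will give an explicit relaxed isotopy that uses exactly $k$ unstick moves. Fix a diagram $(D,S)$ of $K^*$ with $|S|=k$. First apply the unstick move $\mathcal{U}_x$ successively at every $x\in S$, in any order. Each application is an admissible step of a relaxed isotopy and lowers the rigidity level by one, as in \S\ref{subsec:filtration}. After $k$ steps we reach the diagram $(D,\emptyset)$, which lies in $\mathcal{R}_0$.

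Next I identify what $(D,\emptyset)$ is. By the proposition that the unstick move does not alter the underlying classical knot type, applied $k$ times, $\pi$ of the stuck type of $(D,\emptyset)$ equals $\pi(K^*)$, which is the unknot by hypothesis. On the other hand, $(D,\emptyset)$ has no rigid vertices, so it is $\iota$ of its classical type. Since $\pi\circ\iota=\mathrm{id}_{\mathcal{K}}$, that classical type is $\pi$ of its stuck type, i.e. the unknot. Hence $D$ is a classical diagram of the unknot.

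By the classical Reidemeister theorem, $D$ is related to the crossingless circle by planar isotopies and Reidemeister moves. Every crossing is now classical, so all these moves are "classical Reidemeister moves performed away from stuck crossings", and the analogue of the Reidemeister theorem for stuck knots makes them stuck isotopy moves. They cost no unstick moves, and the endpoint is by definition the trivial stuck knot $\bigcirc$. Concatenating the two stages gives a relaxed isotopy with exactly $k$ unstick moves, so $u(K^*,\bigcirc)\le k$.

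The step I expect to be the main obstacle is the bookkeeping in the identification step: making sure "underlying classical knot type" of the diagram $(D,\emptyset)$ agrees with what the classical Reidemeister theorem sees. My plan is to lean on $\pi\circ\iota=\mathrm{id}$ together with the unstick proposition, as above. Everything else is a direct consequence of the definitions.
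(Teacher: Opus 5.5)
Your proof is correct and follows the same route as the paper: unstick all $k$ rigid crossings, note that the result is a classical diagram of the unknot, and finish with classical Reidemeister moves, which cost no further unstick moves. Your extra bookkeeping via $\pi\circ\iota=\mathrm{id}_{\mathcal{K}}$ and the unstick proposition only makes explicit the identification step that the paper states in one line.
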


\begin{proof}
The lower bound is immediate. Unsticking all $k$ rigid crossings produces a classical diagram of the unknot, which can then be reduced to the trivial diagram via classical Reidemeister moves. Hence at most $k$ unsticking operations are required.
\end{proof}

This estimate shows that the unsticking distance is controlled by the number of rigid crossings, while still allowing for the possibility that fewer constraints need to be released when rigidity is concentrated in configurations removable after a small number of unsticking operations. We next observe that unsticking distance behaves subadditively under concatenation of relaxed isotopies.

\begin{proposition}
For stuck knot types $K_1^*, K_2^*, K_3^*$, suppose there exist relaxed isotopies from $K_1^*$ to $K_2^*$ and from $K_2^*$ to $K_3^*$. Then the unsticking distance satisfies
\[
u(K_1^*,K_3^*) \le u(K_1^*,K_2^*) + u(K_2^*,K_3^*).
\]
\end{proposition}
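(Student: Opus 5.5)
The plan is the standard concatenation argument for a minimum defined over paths. Because relaxed isotopies from $K_1^*$ to $K_2^*$ and from $K_2^*$ to $K_3^*$ are assumed to exist, both $u(K_1^*,K_2^*)$ and $u(K_2^*,K_3^*)$ are finite. Each is a minimum over a nonempty set of nonnegative integers, so it is attained. I would therefore fix a relaxed isotopy $\gamma_1$ between representatives $D_1$ of $K_1^*$ and $D_2$ of $K_2^*$ that uses exactly $u(K_1^*,K_2^*)$ unstick moves. Similarly, I would fix a relaxed isotopy $\gamma_2$ between representatives $D_2'$ of $K_2^*$ and $D_3$ of $K_3^*$ that uses exactly $u(K_2^*,K_3^*)$ unstick moves.

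The first step is to glue the two sequences. The endpoint $D_2$ of $\gamma_1$ and the starting point $D_2'$ of $\gamma_2$ may be different diagrams, but they represent the same stuck knot type $K_2^*$. By the analogue of the Reidemeister theorem for stuck knots, they are therefore related by a finite sequence $\sigma$ of planar isotopies, classical Reidemeister moves away from stuck crossings, and rigid-vertex moves. None of these is an unstick move. The concatenation $\gamma_1\cdot\sigma\cdot\gamma_2$ is then a finite sequence of diagrams from $D_1$ to $D_3$, and each step is either a stuck isotopy move or an unstick move.

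The second step is to check that this concatenation really is a relaxed isotopy. This is the only point needing care, because of the clause that crossings, once unstuck, remain classical throughout the sequence. Within $\gamma_1$, $\sigma$ and $\gamma_2$ separately, no rigid vertex is ever created: stuck isotopy moves preserve the set of rigid-height vertices, as noted in \S\ref{subsec:filtration}, and unstick moves only remove them. Hence the number of rigid vertices is non-increasing along the whole concatenation, and no classical crossing is ever restuck. The irreversibility condition is therefore satisfied, and the concatenation is admissible.

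Finally, I count the unstick moves. The concatenation contains exactly $u(K_1^*,K_2^*)+0+u(K_2^*,K_3^*)$ of them. Since $u(K_1^*,K_3^*)$ is the minimum over all relaxed isotopies between representatives of $K_1^*$ and $K_3^*$, it is at most this number, which is the claimed inequality.

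I expect the main obstacle to be orientation rather than the concatenation itself. Unstick moves can only decrease rigidity, so a relaxed isotopy is directional, while the definition of $u$ speaks of a sequence ``connecting'' the two types. I would read the hypothesis as providing sequences in the directions $K_1^*\to K_2^*$ and $K_2^*\to K_3^*$, which is exactly what the concatenation needs. If instead $u$ is meant symmetrically, one may need to reverse a sequence, but reversal turns unstick moves into sticking moves, which are not allowed. In that reading I would interpret ``connecting'' as a zigzag of relaxed isotopies, and the same gluing argument, counting unstick moves in each leg, still gives the bound.
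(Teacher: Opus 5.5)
Your proposal is correct and is essentially the paper's argument: concatenate a minimizing relaxed isotopy from $K_1^*$ to $K_2^*$ with one from $K_2^*$ to $K_3^*$ and count unstick moves. Your stuck-isotopy bridge between the two representatives of $K_2^*$ and your check of the irreversibility clause supply details the paper leaves implicit; your worry about direction is also harmless, since stuck isotopy preserves the number of rigid vertices and each unstick move lowers it by one, so every relaxed isotopy from $K_1^*$ to $K_2^*$ uses exactly the difference of their rigidity levels in unstick moves, and a reverse one can exist only when that difference is zero.
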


\begin{proof}
Let $\gamma_1$ be a relaxed isotopy from $K_1^*$ to $K_2^*$ realizing $u(K_1^*,K_2^*)$, and let $\gamma_2$ be one from $K_2^*$ to $K_3^*$ realizing $u(K_2^*,K_3^*)$. Performing first a relaxed isotopy from $K_1^*$ to $K_2^*$ and then one from $K_2^*$ to $K_3^*$ yields a relaxed isotopy from $K_1^*$ to $K_3^*$. The number of unstick moves in the concatenated sequence is at most the sum of the two quantities, establishing the inequality.
\end{proof}

\begin{remark}
The unsticking distance therefore behaves like a path-length in the space of stuck knots under rigidity release. Although this suggests a geometric structure on the collection of stuck knots, we do not pursue a metric formulation here.
\end{remark}


\subsection{Rigidity as an Obstruction}\label{subsec:obstruction}

The unsticking distance admits a geometric interpretation in terms of \emph{move obstructions}. In classical knot theory, local configurations may simplify through Reidemeister moves, allowing crossings to cancel or be removed by ambient deformation. In the stuck setting, however, a
rigid-height crossing cannot participate freely in such local simplifications: rigidity may \emph{freeze} an otherwise removable configuration. In this sense, rigid crossings act as \emph{local barriers}
preventing specific Reidemeister moves from being realized until rigidity has been released.

This viewpoint is useful for deriving lower bounds on unsticking distance. Rather than analyzing an entire diagram globally, one may locate local configurations that are classically reducible but are
rigidity-obstructed. Each such configuration forces at least one unstick move before the corresponding classical simplification becomes available.

\medskip

Throughout, when we say that a configuration is \emph{supported in a disk} we mean that there exists an
embedded disk $\Delta$ in the projection plane such that $\Delta$ contains exactly the crossings and arcs
depicted in the configuration, and the diagram meets $\partial\Delta$ in the appropriate number of
transverse boundary points. This standard locality condition ensures that the obstruction is genuinely
local and does not depend on the global geometry of the diagram.


\subsubsection*{Rigid twists and obstructed Reidemeister 1}

A particularly simple obstruction arises from a twist whose unique crossing is rigid. In the classical theory this is eliminated by a Reidemeister 1 move, but in stuck isotopy the rigid crossing cannot be
removed. Once the crossing is unstuck, it becomes a classical crossing and the Reidemeister 1 move becomes admissible (recall right hand side of Figure~\ref{fig:unstick_RII}).

\begin{definition}
A \emph{rigid twist} is a diagrammatic configuration supported in a disk whose underlying classical diagram is a Reidemeister 1 kink and whose unique crossing is rigid.
\end{definition}

\begin{proposition}
If a diagram of a stuck knot contains a rigid twist supported in a disk, then any relaxed isotopy simplifying the diagram must include at least one unstick move supported in that disk. In particular, the presence of a rigid twist forces the inequality
\[
u(K^*,K'^*) \ge 1
\]
for any stuck knot type $K'^*$ represented by a diagram in which that twist has been removed.
\end{proposition}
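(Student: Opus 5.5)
The argument rests on the rigidity level being invariant under stuck isotopy; to prove the inequality we just need a relaxed isotopy with no unstick moves to be impossible. Suppose, for contradiction, that some relaxed isotopy from a diagram $D$ of $K^*$ (containing the rigid twist at vertex $x$, supported in a disk $\Delta$) to a diagram $D'$ of $K'^*$ in which the twist has been removed uses no unstick move. Then every step is a stuck isotopy move, so $K^*$ and $K'^*$ are stuck isotopic. By the definition of stuck isotopy, no move creates, removes or splits a rigid-height vertex, and each rigid vertex is carried along with its height ordering; as noted in \S\ref{subsec:filtration}, the rigidity level is therefore preserved. The plan is to strengthen this to a tracking statement: the vertex $x$ is carried by the sequence to a well-defined rigid vertex $x'$ of $D'$.

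Having tracked $x$, I would argue that $x'$ still bounds a kink-type loop, so the twist has not been removed. Here ``the twist has been removed'' must be interpreted as the rigid vertex of the twist no longer being present. To get a contradiction I would use the remark after the Reidemeister-type theorem: Reidemeister 1 is forbidden at rigid-height vertices. The only classical way to eliminate the crossing of a kink is an R1 move at that crossing. Every stuck move either avoids $x$ or is a rigid-vertex move preserving $x$ with its attached strands, so $x$ persists and cannot disappear. A cleaner route is to read ``removed'' as ``$K'^*$ has strictly fewer rigid vertices than $K^*$'', or as ``$x$ does not survive''. Either way, a relaxed isotopy with zero unstick moves preserves the count and identity of rigid vertices, which gives the contradiction. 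Hence at least one unstick move occurs, and so $u(K^*,K'^*)\ge1$.

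For the locality claim, that some unstick move is supported in $\Delta$, the plan is to note that the only way to remove $x$ is to unstick $x$ itself. Unstick moves at other vertices do not affect $x$, and stuck moves preserve it. So the sequence must contain $\mathcal{U}_x$, which acts at a crossing inside $\Delta$. This requires following $x$ through all the moves before $\mathcal{U}_x$ and observing that the move is applied at the tracked vertex originating in $\Delta$.

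The main obstacle is making ``the twist has been removed'' precise as an invariant statement. Diagrammatically, $D'$ could contain a rigid vertex that is no longer drawn as a kink yet is still the image of $x$. I would handle this by stating the conclusion in terms of the survival of the vertex $x$, tracked through moves, rather than its kink shape. This works because the definition of stuck isotopy gives an ambient isotopy that carries rigid points to rigid points.
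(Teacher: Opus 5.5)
Your proposal is correct and uses the same idea as the paper's proof: stuck isotopy can neither remove nor separate a rigid-height vertex, so the twist's vertex can only disappear through an unstick move at that vertex, which forces $u(K^*,K'^*)\ge 1$. Your reading of ``removed'' via the rigidity level (or survival of the tracked vertex $x$) and your vertex-tracking argument for the locality claim make precise what the paper asserts informally; the paper's extra remark that Reidemeister 1 becomes admissible after unsticking shows sufficiency and is not needed for the lower bound.
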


\begin{proof}
By hypothesis, the disk contains a single crossing whose local underlying classical configuration is a Reidemeister 1 kink. Under stuck isotopy, rigid crossings cannot be removed, so the kink cannot be eliminated by stuck isotopy moves alone. In a relaxed isotopy, the only operation that changes rigidity is an unstick move, and once the crossing is unstuck it becomes classical. At that point the classical Reidemeister 1 move is admissible (it is performed away from rigid crossings) and removes the kink. Therefore at least one unstick move in that disk is necessary to realize the simplification.
\end{proof}

\begin{remark}
This is the minimal obstruction phenomenon: a \emph{single} rigid crossing can block an entire local simplification move. This illustrates that rigidity constrains the \emph{move calculus} rather than merely
contributing to crossing count.
\end{remark}


\subsubsection*{Half-rigid Reidemeister 2 and one-move barriers}

A second basic obstruction occurs when a Reidemeister 2 cancellation would remove a pair of crossings, but rigidity prevents the cancellation even if only one of the two crossings is rigid (recall left hand side of Figure~\ref{fig:unstick_RII}).

\begin{definition}
A \emph{half-rigid Reidemeister 2 configuration} is a local configuration supported in a disk whose underlying classical diagram is a Reidemeister 2 pair, and in which exactly one of the two crossings is
rigid (the other being classical), with the rigid crossing having the same over-under type as required by the classical R2 cancellation.
\end{definition}

\begin{proposition}
If a diagram contains a half-rigid Reidemeister 2 configuration supported in a disk, then any relaxed isotopy eliminating that local pair of crossings requires at least one unstick move supported in that disk.
\end{proposition}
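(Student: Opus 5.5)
The plan is to follow the pattern of the rigid-twist proposition: show that no stuck isotopy move alone can cancel the pair, and that the only operation in a relaxed isotopy that changes the rigid structure is an unstick move. First, I will use the fact recorded in the rigidity filtration subsection that stuck isotopy preserves the number of rigid-height vertices. Indeed, by the definition of stuck isotopy, no ambient isotopy may create, remove or split a rigid-height vertex. Eliminating the local pair of crossings in the disk removes the rigid crossing of the half-rigid configuration. So any sequence eliminating the pair must at some point reduce the rigidity count attached to that vertex. Since the stuck isotopy moves (planar isotopy, classical Reidemeister moves away from stuck crossings, rigid-vertex moves preserving attachment) never change the set of rigid vertices, this reduction cannot come from stuck isotopy moves.

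Second, I will argue that the classical Reidemeister 2 cancellation itself is unavailable while the rigid crossing persists. The analogue of the Reidemeister theorem allows classical Reidemeister moves only away from stuck crossings. An R2 cancellation inside $\Delta$ would involve the rigid crossing, so it is not among the generating moves. The rigid-vertex moves keep the incident strands attached at the vertex, so they cannot separate the two strands of the R2 bigon. Hence, as long as the crossing stays rigid, the pair survives; at most it is moved around by stuck isotopy.

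Third, in a relaxed isotopy the only remaining operation is the unstick move $\mathcal{U}_x$. Therefore some unstick move must be applied to the rigid crossing $x$ of the configuration before the pair can disappear. Because unstick moves are local and act only at the crossing itself, this move is supported in (a neighbourhood of $x$ inside) $\Delta$. After it is applied, the configuration is a classical R2 pair with matching over-under data, by the hypothesis on the rigid crossing's type. The classical R2 move then becomes admissible, which shows that one unstick move also suffices.

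The main obstacle is the phrase ``supported in that disk'': intermediate stuck isotopy moves could transport the rigid vertex out of $\Delta$ before it is unstuck. I would handle this by tracking the vertex $x$ itself rather than the disk. Stuck isotopy carries $x$ to a well-defined rigid vertex, and the ambient isotopy carries $\Delta$ along to a disk containing it. So the necessary unstick move is applied to the image of $x$ and is supported in the image of the configuration disk, which is the sense in which the statement should be read.
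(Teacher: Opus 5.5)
Your proposal is correct and follows the paper's own argument. The R2 cancellation is inadmissible while one crossing is rigid, because stuck isotopy cannot remove the rigid-height vertex; in a relaxed isotopy the only operation that changes rigidity is an unstick move; and after unsticking, the pair is a classical R2 pair that cancels. Your explicit appeal to the invariance of rigid vertices under stuck isotopy, and your tracking of the vertex $x$ and of the image of $\Delta$ to make sense of ``supported in that disk,'' are careful additions that the paper leaves implicit.
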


\begin{proof}
Classically, the two crossings cancel by a Reidemeister 2 move. In the stuck setting, the cancellation is inadmissible as long as one of the two crossings is rigid: the local move would remove (or separate) the
rigid-height vertex, which is forbidden under stuck isotopy. In a relaxed isotopy, the only way to make the Reidemeister 2 move available is to first unstick the rigid crossing, converting it into a classical crossing.
After this conversion the local diagram inside the disk is precisely a classical R2 pair, so the Reidemeister 2 move cancels it. Hence at least one unstick move is necessary.
\end{proof}

\begin{remark}
The previous proposition shows that lower bounds for the unsticking distance can arise from local configurations in which \emph{only one} rigid crossing obstructs a classical cancellation. This is sharper than the situation where both crossings in an R2 pair are rigid, since one unstick move can already unlock the cancellation.
\end{remark}


\subsubsection*{Independent barriers and lower bounds}

The obstruction viewpoint becomes particularly effective when one can find many independent local
barriers supported in disjoint regions.

\begin{definition}
A collection of local configurations in a diagram is \emph{pairwise disjoint} if each configuration is supported
in an embedded disk and these disks are pairwise disjoint.
\end{definition}

\begin{proposition}\label{prop:independent_barriers}
Suppose a diagram of a stuck knot contains $m$ pairwise disjoint rigidity-obstructed Reidemeister configurations, each of which becomes removable by a classical Reidemeister move after at least one unstick move inside its supporting disk. Then
\[
u(K^*,K'^*) \ge m
\]
for any stuck knot type $K'^*$ represented by a diagram in which all $m$ configurations have been removed. In particular, if the underlying classical knot is the unknot and $K'^*=\bigcirc$, then
\[
u(K^*,\bigcirc)\ge m.
\]
\end{proposition}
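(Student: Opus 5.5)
The plan is a counting argument that assigns each required unstick move to exactly one supporting disk, combined with the local obstruction results already established. Fix a representative diagram $D$ of $K^*$ containing the $m$ pairwise disjoint disks $\Delta_1,\dots,\Delta_m$. Take any relaxed isotopy $\gamma$ from $D$ to a diagram $D'$ of $K'^*$ in which all $m$ configurations have been removed. We show that $\gamma$ contains at least $m$ unstick moves. Taking the minimum over all such $\gamma$ then gives $u(K^*,K'^*)\ge m$, and the case of the unknot follows by setting $K'^*=\bigcirc$.

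The first step is to apply, disk by disk, the argument from the proofs of the rigid twist and half-rigid Reidemeister 2 propositions. By hypothesis, the configuration in $\Delta_i$ contains at least one rigid crossing whose rigidity blocks the classical cancellation. Under stuck isotopy no rigid-height vertex is created, removed or split. So the rigid crossings of $\Delta_i$ persist as rigid vertices along $\gamma$ until one of them is unstuck. Since the configuration is absent in $D'$, some rigid crossing $x_i$ originally in $\Delta_i$ must be unstuck at some step of $\gamma$. This yields a map $i\mapsto x_i$ from configurations to unstick moves of $\gamma$.

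The second step is to show that this map is injective, so that $\gamma$ contains at least $m$ distinct unstick moves. Each unstick move $\mathcal{U}_x$ acts on a single crossing $x$, and it converts that crossing irreversibly, since once unstuck a crossing stays classical. Rigid vertices are never created and keep their identity as points of $S$ throughout a stuck isotopy, so each rigid crossing can be tracked along $\gamma$ as the same vertex. The crossings $x_i$ lie in pairwise disjoint disks, so they are distinct elements of the initial set $S$. Hence the unstick moves $\mathcal{U}_{x_i}$ are pairwise distinct, and $\gamma$ contains at least $m$ of them.

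The main obstacle is making precise the sense in which the configurations in the disks ``must be removed.'' The rigid vertices can move out of their original disks during the isotopy, so one cannot simply argue that each disk individually needs its own unstick move. The tracking argument above avoids this. It does not rely on the disks staying fixed; it only uses that distinct rigid vertices are distinct and that each unstick move releases exactly one of them. What remains to be checked is that ``removed in $D'$'' forces at least one rigid vertex of each configuration to be released. I would handle this by reading the hypothesis as saying that the obstructing rigid vertex of each configuration is not present as a rigid vertex in $D'$, and then citing the invariance of the rigid vertex set under stuck isotopy, which is the same fact used to show that the rigidity level $\mathcal{R}_k$ is well defined.
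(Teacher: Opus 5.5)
Your instinct about the disks is right. The paper's proof argues disk by disk: an unstick move in $\Delta_i$ cannot affect admissibility in $\Delta_j$, so each disk needs its own unstick move. That tacitly assumes the disks stay put during the relaxed isotopy, so tracking rigid vertices instead is a genuine improvement.

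However, the step that defines your injection does not follow. You claim that, since the configuration is absent in $D'$, some rigid crossing $x_i$ originally in $\Delta_i$ must be unstuck along $\gamma$. Tracking vertices along $\gamma$ does identify the surviving rigid vertices of $D$ with those of $D'$. But nothing forces this identification to agree with the one under which $D'$ is ``$D$ with the configurations removed'', because stuck isotopy can permute rigid vertices.

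Here is a counterexample to that step:
\begin{itemize}
\item Take a round unknot carrying two identical rigid twists at antipodal points. Let $m=1$, with the configuration being the twist in $\Delta_1$, and let $D'$ be the diagram with that twist deleted.
\item Rotating the plane by a half-turn is a planar isotopy that carries $D$ to itself while swapping the two rigid vertices.
\item Now unstick the vertex lying in $\Delta_1$ and remove it by a Reidemeister 1 move.
\end{itemize}
This $\gamma$ ends at $D'$ using one unstick move. Yet the vertex it releases started outside $\Delta_1$, while $x_1$ survives as the rigid vertex of $D'$. So your reading of the hypothesis (``the obstructing vertex is not present in $D'$'') would have to be made relative to $\gamma$, which is not what the statement says.

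The repair is to drop vertex identities and count, which is really what the fact you cite at the end provides.
\begin{itemize}
\item Stuck isotopy moves preserve the number of rigid vertices, each unstick move lowers it by exactly one, and unstuck crossings never become rigid again.
\item Hence every relaxed isotopy from a representative of $K^*$ to one of $K'^*$ contains exactly $k(K^*)-k(K'^*)$ unstick moves, where $k$ is the rigidity level. This also removes the need to pad $\gamma$ so that it starts at $D$ and ends at $D'$, a step your write-up skips.
\item Each configuration contains at least one rigid crossing and the disks are disjoint. So excising the $m$ configurations removes at least $m$ distinct rigid vertices and creates none, giving $k(K'^*)\le k(K^*)-m$.
\item Therefore $u(K^*,K'^*)\ge m$, and trivially so if $u=\infty$.
\end{itemize}
Incidentally, this shows that with the paper's definitions $u(K_1^*,K_2^*)$ is just $k(K_1^*)-k(K_2^*)$ whenever it is finite. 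Neither disk localization nor vertex tracking is actually needed.
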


\begin{proof}
Fix supporting disks $\Delta_1,\dots,\Delta_m$ for the $m$ configurations, with $\Delta_i\cap\Delta_j=\emptyset$ for $i\neq j$. By assumption, for each $i$ the local configuration inside $\Delta_i$ cannot be removed by stuck isotopy alone; it becomes removable only after performing at least one unstick move supported in $\Delta_i$. Because the disks are disjoint, an unstick move performed in $\Delta_i$ cannot affect the rigidity structure or local move admissibility inside $\Delta_j$ for $j\neq i$. Hence the required rigidity releases are independent:
any relaxed isotopy that removes all $m$ configurations must contain at least one unstick move in each disk. Therefore at least $m$ unstick moves are required in total.
\end{proof}

\begin{figure}[ht]
\centering
\includegraphics[width=0.52\textwidth]{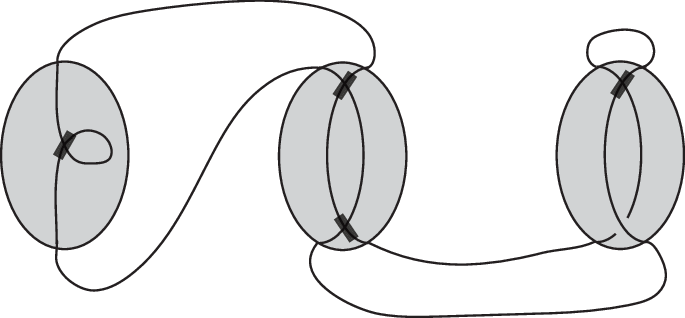}
\caption{Pairwise disjoint rigidity barriers supported in disjoint disks. Each barrier requires at least one unstick move before a local Reidemeister simplification becomes admissible.}
\label{fig:barriers}
\end{figure}

Proposition~\ref{prop:independent_barriers} expresses a general principle: rigidity creates \emph{local} geometric obstructions to simplification. Locating families of barriers supported in disjoint disks yields systematic lower bounds for unsticking distance. More broadly, this suggests viewing rigid crossings as structural constraints that partition a diagram into regions of constrained behavior. Understanding how such barriers interact (whether they reinforce, propagate or
neutralize one another) is a natural direction for future investigation.

\subsection{Examples of Unsticking Distance}

We conclude by illustrating the unsticking distance through several elementary configurations that exhibit progressively stronger forms of rigidity obstruction.

\begin{example}
The presence of a single rigid crossing prevents a Reidemeister 1 move that would otherwise eliminate the twist. Once the crossing is unstuck, the classical reduction becomes available, and removes the crossing immediately. Hence this configuration has unsticking distance one.
\end{example}

\begin{example}
Consider a diagram of the unknot containing a Reidemeister 2 configuration in which one crossing is rigid and the other is classical. Although the crossings would ordinarily cancel, the rigid-height vertex prevents the strands from separating, so stuck isotopy alone cannot remove the pair. After a single unstick move at the rigid crossing, the configuration becomes a classical Reidemeister 2 pair and cancels immediately. Hence the unsticking distance to the trivial stuck knot is one.
\end{example}

\begin{example}
Consider a diagram of the unknot containing several rigid twists supported in pairwise disjoint disks. Each twist requires one unstick move before a Reidemeister 1 reduction becomes available. Because the configurations are disjoint, the required unsticking operations are independent. If there are $m$ such twists, then any relaxed isotopy simplifying the diagram must include at least $m$ unstick moves, and therefore
\[
u(K^*,\bigcirc) \ge m.
\]
\end{example}

These examples highlight that the unsticking distance depends not only on the number of rigid crossings but also on their geometric arrangement. Even diagrams representing the classical unknot may exhibit large unsticking distance when rigidity is distributed across independent local barriers. This perspective reinforces the interpretation of rigidity as a structural obstruction to isotopy and suggests that constrained embeddings possess a notion of complexity distinct from classical measures such as crossing number or unknotting number.




\begin{thebibliography}{99}

\bibitem{Bataineh2020}
K.~Bataineh,
\newblock Stuck knots,
\newblock {\em Symmetry}, \textbf{12} (2020), no.~9, 1558.

\bibitem{BondarenkoCenicerosElhamdadiJones2025}
E.~Bondarenko, J.~Ceniceros, M.~Elhamdadi, and B.~Jones,
\newblock Generalized quandle polynomials and their applications to stuquandles, stuck links, and RNA folding,
\newblock {\em Open Math.} \textbf{23} (2025), 20250151.

\bibitem{CenicerosElhamdadiKomissarLahrani2024}
J.~Ceniceros, M.~Elhamdadi, J.~Komissar, and H.~Lahrani,
\newblock RNA foldings and stuck knots,
\newblock {\em Communications of the Korean Mathematical Society} \textbf{39} (2024), no.~1, 223--245.

\bibitem{CenicerosElhamdadiMagillRosario2023}
J.~Ceniceros, M.~Elhamdadi, B.~Magill, and G.~Rosario,
\newblock RNA foldings, oriented stuck knots and state sum invariants,
\newblock {\em J. Math. Phys.} \textbf{64} (2023), 031702.

\bibitem{Diamantis2024}
I.~Diamantis, S.~Lambropoulou, and S.~Mahmoudi,
\newblock From annular to toroidal pseudo knots,
\newblock {\em Symmetry} \textbf{16} (2024), no.~10, 1360.

\bibitem{DiamantisKauffmanLambropoulou2025}
I.~Diamantis, L.~H.~Kauffman, and S.~Lambropoulou,
\newblock Topology and algebra of bonded knots and braids,
\newblock {\em Mathematics} \textbf{13} (2025), no.~20, 3260.

\bibitem{Hanaki2010}
R.~Hanaki,
\newblock Pseudo diagrams of knots, links and spatial graphs,
\newblock {\em Osaka J. Math.} \textbf{47} (2010), 863--883.

\bibitem{HenrichKauffman2017}
A.~Henrich and L.H.~Kauffman,
\newblock Tangle insertion invariants for pseudoknots, singular knots, and rigid vertex spatial graphs,
\newblock {\em Knots, Links, Spatial Graphs, and Algebraic Invariants} (E. Flapan et al., eds.), 
\newblock Contemporary Mathematics, vol. 689, Amer. Math. Soc., 2017, 61--83.

\bibitem{HOMFLY}
P.~Freyd, D.~Yetter, J.~Hoste, W.~B.~R.~Lickorish, K.~Millett, and A.~Ocneanu,
\newblock A new polynomial invariant of knots and links,
\newblock {\em Bull. Amer. Math. Soc.} \textbf{12} (1985), 239--246.

\bibitem{Jones1985}
V. F. R. Jones,
\newblock A polynomial invariant for knots via von Neumann algebras,
\newblock {\em Bull. Amer. Math. Soc.} \textbf{12} (1985), 103--111.

\bibitem{Kauffman1987}
L.~H.~Kauffman,
\newblock State models and the Jones polynomial,
\newblock {\em Topology} 1987, \textbf{26}, 395--407.

\bibitem{Kauffman1989}
L.~H.~Kauffman,
\newblock Invariants of graphs in three-space,
\newblock {\em Trans. Amer. Math. Soc.} 1989, \textbf{311}, 697--710.

\bibitem{Kauffman1991}
L.~H.~Kauffman,
\newblock Knots and Physics,
\newblock {\em World Scientific}, Singapore, 1991.

\bibitem{Kauffman1999}
L.H.~Kauffman,
\newblock Virtual knot theory,
\newblock {\em European Journal of Combinatorics}, {\bf 20}(7), 1999, 663--691.

\bibitem{P}
J.~Przytycki,
\newblock Skein modules of 3-manifolds,
\newblock \emph{Bull. Pol. Acad. Sci.: Math.}, {\bf 39, 1-2} (1991), 91--100.

\bibitem{Rolfsen}
D.~Rolfsen,
\newblock Knots and Links,
\newblock \emph{AMS Chelsea Publishing}, 2003.

\bibitem{Vassiliev1990}
V.A.~Vassiliev,
\newblock Cohomology of knot spaces,
\newblock Theory of Singularities and its Applications (V.I. Arnold, ed.),
\newblock {\em  Adv. Soviet Math.}, vol. 1, Amer. Math. Soc., Providence, RI, 1990, 23--69.

\bibitem{Yamada1989}
S.~Yamada,
\newblock An invariant of spatial graphs,
\newblock {\em J. Graph Theory} \textbf{13} (1989), No.~5, 537--551.



\end{thebibliography}
\end{document}